\newcommand{\qtq}[1]{\quad\text{#1}\quad}
\newcommand{\R}{\mathbb{R}}
\newcommand{\C}{\mathbb{C}}
\newtheorem{theorem}{Theorem}[section]
\newtheorem{lemma}[theorem]{Lemma}
\newtheorem{proposition}[theorem]{Proposition}
\theoremstyle{definition}
\newtheorem{definition}[theorem]{Definition}
\theoremstyle{remark}
\numberwithin{equation}{section}
\begin{document}

\title[Dispersive decay for mass-critical NLS]{Dispersive decay for the mass-critical\\nonlinear Schr\"odinger equation}

\author[C. Fan]{Chenjie Fan}
\address{Academy of Mathematics and Systems Science
and
Hua Loo-Keng Key Laboratory of Mathematics, Chinese Academy of Sciences,
Beijing, China.}
\email{fancj@amss.ac.cn}

\author[R. Killip]{Rowan Killip}
\address{Department of Mathematics,  UCLA, Los Angeles, USA.}
\email{killip@math.ucla.edu}

\author[M. Visan]{Monica Visan}
\address{Department of Mathematics,  UCLA, Los Angeles, USA.}
\email{visan@math.ucla.edu}

\author[Z. Zhao]{Zehua Zhao}
\address{Department of Mathematics and Statistics, Beijing Institute of Technology
and
MIIT Key Laboratory of Mathematical Theory and Computation in Information Security, Beijing, China.}
\email{zzh@bit.edu.cn}

\maketitle

\begin{abstract}  We prove dispersive decay, pointwise in time, for solutions to the mass-critical nonlinear Schr\"odinger equation in spatial dimensions $d=1,2,3$.  
\end{abstract}

\section{Introduction}

We consider the long-time behavior of solutions to the initial-value problem 
\begin{equation}\label{NLS}
\begin{cases}
i\partial_t u+ \Delta u = \pm |u|^{\frac4d}u\\
u(0,x) = u_0(x) \in L^2(\R^d).
\end{cases}
\end{equation}
With the plus sign, this is a defocusing nonlinear Schr\"odinger equation; with the minus sign, it is focusing.
Our particular choice of nonlinearity makes the equation \emph{mass-critical}: the scaling symmetry associated with \eqref{NLS}, namely, 
\begin{align*}
u(t,x)\mapsto u_\lambda(t,x):= \lambda^{\frac d2} u(\lambda^2t, \lambda x), \qquad \lambda>0,
\end{align*}
also leaves invariant the (conserved) \emph{mass} of solutions
\begin{align*}
M(u) = \int_{\R^d} |u(t,x)|^2\, dx.
\end{align*}

In an impressive series of works \cite{Dodson1, Dodson2, Dodson3, Dodson4}, Dodson proved global well-posedness and scattering for solutions to \eqref{NLS}.  In the focusing case, this holds under the additional assumption that the mass of the initial data is smaller than that of the ground state $Q$, which is the unique non-negative, radial, $H^1(\R^d)$-solution to the elliptic equation
$$
\Delta Q + |Q|^{\frac4d}Q = Q.
$$
This mass restriction is sharp: finite time blowup can occur for solutions to \eqref{NLS} with initial data with mass $M(u_0)\geq M(Q)$.  Our next theorem summarizes the results of Dodson from \cite{Dodson1, Dodson2, Dodson3, Dodson4}; for earlier work on this problem, see \cite{KTV,KV-Clay,KVZ,TVZ1,TVZ2}.

\begin{theorem}[Dodson]\label{T:Dodson}
Fix $d\geq 1$ and let $u_0\in L^2(\R^d)$.  In the focusing case assume also that $M(u_0)<M(Q)$.  Then there exists a unique global solution $u\in (C_tL_x^2\cap L_{t,x}^{2(d+2)/d})(\R\times\R^d)$ to \eqref{NLS} and
\begin{align}\label{S bdd}
\|u\|_{ L_{t,x}^{\frac{2(d+2)}d}(\R\times\R^d)} \leq C\bigl(\|u_0\|_{L^2_x}\bigr).
\end{align}
Moreover, there exist asymptotic states $u_\pm\in L^2(\R^d)$ such that
\begin{align}\label{scat}
\lim_{t\to \pm \infty} \|u(t)-e^{it\Delta}u_{\pm}\|_{L_x^2} = 0.
\end{align}
\end{theorem}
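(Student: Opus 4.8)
The plan is to follow the concentration-compactness and rigidity strategy now standard for critical dispersive equations, the crucial new analytic inputs being Dodson's long-time Strichartz estimates and his frequency-localized interaction Morawetz inequality. First I would set up the local theory: Strichartz estimates and a fixed-point argument yield local well-posedness, uniqueness in $C_tL^2_x\cap L^{2(d+2)/d}_{t,x}$, and small-data global well-posedness with \eqref{S bdd}; I would also record the perturbation/stability theory, namely that an approximate solution to \eqref{NLS} of comparable mass and finite $L^{2(d+2)/d}_{t,x}$ size lies close to a genuine solution. With these in hand, \eqref{S bdd} reduces to an a priori spacetime bound, and the scattering statement \eqref{scat} then follows from \eqref{S bdd} by a routine Duhamel argument.

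Next I would argue by contradiction. Let $m_0$ be the supremum of all $m>0$---with $m\le M(Q)$ in the focusing case---such that every solution of mass $\le m$ is global and obeys \eqref{S bdd}; small-data theory gives $m_0>0$ and the goal is $m_0=\infty$ (resp.\ $m_0=M(Q)$). If this failed, an $L^2$ linear profile decomposition (in the tradition of Bourgain, Merle--Vega, Carles--Keraani, and B\'egout--Vargas) together with the stability theory would produce a minimal-mass almost periodic solution: a global solution $u$ with $M(u)=m_0$ whose orbit is precompact in $L^2(\R^d)$ modulo the symmetries, i.e.\ there are $N(t)>0$ and $x(t),\xi(t)\in\R^d$ with $\bigl\{\,e^{ix\cdot\xi(t)}N(t)^{-d/2}u\bigl(t,N(t)^{-1}(x-x(t))\bigr)\,\bigr\}$ precompact. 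Exploiting the local constancy and frequency-localization properties of $N(t)$, I would then reduce to a short list of enumerated scenarios---typically a rapid frequency-cascade solution and a soliton-like solution with $N(t)\lesssim1$---and normalize $\xi(t)\equiv0$ via a Galilean transformation.

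The heart of the matter is to exclude these scenarios. On a compact interval $J$ split into unit-length subintervals I would prove a long-time Strichartz estimate bounding high-frequency pieces $P_{\ge N}u$ in an $L^2_tL^{2d/(d-2)}_x$-type norm (with suitable substitutes when $d=1,2$) by $N^{-s}$ times powers of $\int_JN(t)\,dt$, obtained by bootstrapping the Duhamel formula against bilinear Strichartz estimates. Inserting this into a carefully frequency-truncated interaction Morawetz inequality yields, simultaneously, a uniform a priori bound on the Morawetz functional and---via almost periodicity and a lower bound on $\int_JN(t)\,dt$---a lower bound that grows with $|J|$; letting $|J|\to\infty$ is the contradiction. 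The frequency-cascade case is instead killed by an additional-regularity argument forcing $u\equiv0$ against $M(u)=m_0>0$, and the focusing case additionally uses the sharp Gagliardo--Nirenberg inequality to keep the kinetic energy of sub-threshold solutions under control.

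I expect the main obstacle to be the low-dimensional cases $d=1,2$, which is exactly why the result is spread across several papers: the interaction Morawetz estimate is much weaker there (in $d=1$ one must replace the naive Morawetz action by a cleverly weighted variant), the bilinear Strichartz gains shrink, and the long-time Strichartz bootstrap demands a finer frequency decomposition, so closing both the bootstrap and the Morawetz contradiction requires considerably more delicate bookkeeping than the comparatively transparent defocusing $d\ge3$ case.
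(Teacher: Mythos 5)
This theorem is not proved in the paper at all: it is a statement of Dodson's results, quoted verbatim from \cite{Dodson1, Dodson2, Dodson3, Dodson4} (with the earlier radial cases in \cite{KTV,KVZ,TVZ1,TVZ2}), and the present authors simply cite it as a black box. So there is no internal proof to compare against. Your outline does faithfully describe the architecture of Dodson's actual arguments: local theory plus stability, reduction of \eqref{scat} to \eqref{S bdd} via Duhamel, the induction-on-mass/profile-decomposition extraction of a minimal-mass almost periodic solution with parameters $N(t),x(t),\xi(t)$, the reduction to the soliton-like and rapid-cascade scenarios, the long-time Strichartz estimates fed into a frequency-localized interaction Morawetz inequality, the additional-regularity argument for the cascade, and the role of the sharp Gagliardo--Nirenberg inequality in the focusing case below $M(Q)$. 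You also correctly flag that $d=1,2$ are the hard cases and why.

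That said, what you have written is a survey-level roadmap, not a proof. Every load-bearing step --- the $L^2$ profile decomposition and its asymptotic decoupling, the stability theory at critical regularity, the compactness-modulo-symmetries of the minimal counterexample, the long-time Strichartz estimate itself (whose bootstrap against bilinear Strichartz is the technical core of all four of Dodson's papers), and the frequency-localized Morawetz inequality with its error analysis --- is an independent theorem requiring tens of pages, and none of them is established or even precisely stated in your proposal. For the purposes of this paper the correct move is exactly what the authors do: cite Theorem~\ref{T:Dodson} to the literature rather than reprove it. If your intent was to supply a proof, the gap is that you have supplied a table of contents; if your intent was to explain why the cited result is true, the sketch is accurate and well organized.
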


The assertion \eqref{scat} is known as \emph{scattering}.  It shows that the long-time behavior of solutions to the nonlinear equation \eqref{NLS} parallels that of the linear Schr\"odinger equation, at least in the $L_t^\infty L_x^2$ metric.

The bound \eqref{S bdd} is also true for the linear Schr\"odinger equation; indeed, this is a prototypical Strichartz inequality.  The integrability in time here expresses important physics: it shows that the solutions of the (non)linear Schr\"odinger equation disperse.  The dispersion encoded by \eqref{S bdd} weakens the effect of the nonlinearity thus providing a simple explanation (both intuitively and rigorously) for \eqref{scat}.

Among the many Schr\"odinger-admissible spacetime norms $L^p_t L^q_x$, we use the symmetric pair ($p=q$) in \eqref{S bdd} for simplicity.  Control on the other scaling-critical norms can then be deduced by well-known arguments; see, for example, the proof of \eqref{1d S intermediary} below.

Prior to the advent of Strichartz estimates, our understanding of the long-time behavior of solutions relied on a different expression of dispersion, namely, quantitative decay pointwise in time.  For the linear Schr\"odinger equation, this is exemplified by the classical dispersive estimate
\begin{align}\label{L disp est'}
\|e^{it\Delta}u_0\|_{L^{r}_x(\R^d)}\lesssim |t|^{-d(\frac12-\frac1r)} \|u_0\|_{L^{r'}_x} \quad\text{for $t\neq 0$ and $2\leq r\leq\infty$}.
\end{align}
The requirement that $u_0\in L^{r'}$ constitutes a spatial concentration requirement for the initial data.  This is what breaks the time translation symmetry and so makes a quantitative decay estimate possible.  See \cite{MR4576318} for further discussion of this point in the nonlinear setting.

In this paper, we  prove analogues of \eqref{L disp est'} for the nonlinear evolution \eqref{NLS}.  Our three theorems treat spatial dimensions one, two, and three, respectively.   
\begin{theorem}\label{T:1d}
Fix $2<r\leq\infty$.  Given $u_0\in L^2(\R)\cap L^{r'}(\R)$, let $u$ denote the global solution to \eqref{NLS} guaranteed by Theorem~\ref{T:Dodson}.  Then
\begin{align}\label{decay 1d}
\sup_{t\neq 0}\, |t|^{\frac12-\frac1r}\|u(t)\|_{L^r_x} \leq C\bigl(\|u_0\|_{L^2_x}\bigr) \|u_0\|_{L_x^{r'}} .
\end{align}
\end{theorem}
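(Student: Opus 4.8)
The plan is to combine the dispersive estimate for the linear flow with the spacetime bound \eqref{S bdd} supplied by Theorem~\ref{T:Dodson}, using the Duhamel formula
\[
u(t) = e^{it\Delta}u_0 \mp i\int_0^t e^{i(t-s)\Delta}\bigl(|u|^{4}u\bigr)(s)\,ds .
\]
Applying \eqref{L disp est'} to both terms, one gets for $t>0$ (the case $t<0$ is symmetric)
\[
\|u(t)\|_{L^r_x} \lesssim t^{-(\frac12-\frac1r)}\|u_0\|_{L^{r'}_x} + \int_0^t |t-s|^{-(\frac12-\frac1r)} \bigl\| |u|^4 u (s)\bigr\|_{L^{r'}_x}\,ds .
\]
So the whole game is to control the Duhamel integral by $t^{-(\frac12-\frac1r)}$ times a constant depending only on the mass. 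For this we first establish a global-in-time Strichartz-type bound on $u$ in a norm adapted to the $L^{r'}$-smallness of the data: namely, something like $\||t|^{\frac12-\frac1q}u\|_{L^\infty_t L^q_x}\lesssim C(\|u_0\|_{L^2})\|u_0\|_{L^{r'}}$ for an auxiliary exponent $q$, together with the scaling-critical spacetime bounds that follow from \eqref{S bdd} (this is the ``\eqref{1d S intermediary}'' computation referenced in the excerpt). The first term above is already of the desired form, so the content is entirely in the nonlinear term.

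The key steps, in order, are: (1) From \eqref{S bdd} and Strichartz, deduce control of $u$ in all scaling-critical Strichartz norms on $\R\times\R$, and in particular split the time axis into finitely many intervals $I_j$ (their number bounded by the mass) on which the relevant spacetime norm of $u$ is small. (2) Run a bootstrap/continuity argument on each $I_j$ for the weighted quantity $A(t) := \sup_{0<s\le t} s^{\frac12-\frac1r}\|u(s)\|_{L^r_x}$ (and a companion quantity at a second exponent needed to close Hölder on the nonlinearity $|u|^4u$, which has five factors). (3) In estimating $\int_0^t |t-s|^{-(\frac12-\frac1r)}\||u|^4u(s)\|_{L^{r'}_x}\,ds$, bound $\||u|^4u(s)\|_{L^{r'}_x}$ by a product of one factor of $u(s)$ measured in a high-integrability norm (carrying a weight $s^{-\alpha}$ from $A(s)$) and four factors of $u(s)$ measured in norms controlled by the scaling-critical Strichartz bounds; then the time integral becomes a convolution of $|t-s|^{-(\frac12-\frac1r)}$ with $s^{-\text{(something)}}$ times an $L^p_s$ function coming from Strichartz, and one checks the exponents add up so that Hardy--Littlewood--Sobolev (or a direct splitting into $s<t/2$ and $s>t/2$) returns exactly the weight $t^{-(\frac12-\frac1r)}$. (4) Sum over the $I_j$ and absorb the small constants to close the bootstrap, yielding \eqref{decay 1d} with $C$ depending only on $\|u_0\|_{L^2_x}$.

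The main obstacle I expect is step (3): making the exponent arithmetic work out so that the singular kernel $|t-s|^{-(\frac12-\frac1r)}$ convolved against the nonlinear source reproduces precisely the decay rate $t^{-(\frac12-\frac1r)}$ without loss, while simultaneously keeping every factor of $u$ in a norm that is either a genuine scaling-critical Strichartz norm (hence globally bounded by the mass via \eqref{S bdd}) or the weighted norm being bootstrapped — and doing so uniformly across the mass-dependent number of subintervals. In particular one must be careful near $s=0$ (the weight $s^{-(\frac12-\frac1r)}$ must be integrable against the source, which forces the dual exponent on the ``weighted'' factor to be chosen in a narrow window) and near $s=t$ (where the kernel singularity must be integrable), and the borderline case $r=\infty$ will likely need separate attention since then $\frac12-\frac1r=\frac12$ sits exactly at the edge of local integrability of the kernel. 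A secondary technical point is justifying the bootstrap rigorously, i.e.\ that $A(t)$ is finite and continuous for small $t$ to start the continuity argument, which should follow from the local theory and the membership $u_0\in L^{r'}$.
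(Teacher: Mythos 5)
Your outline reproduces the skeleton of the paper's argument --- Duhamel, the splitting of the time integral at $s=t/2$, one factor of $u(s)$ measured in the weighted norm and four factors measured in a scaling-critical Strichartz norm, and a bootstrap over finitely many intervals on which a critical spacetime norm is small. But the exponent arithmetic in your step (3), which you yourself identify as the main obstacle, does not close with ordinary Lebesgue--Strichartz norms: it lands exactly on a borderline. Concretely, H\"older in $x$ on $\||u|^4u(s)\|_{L^{r'}_x}\le \|u(s)\|_{L^r_x}\|u(s)\|_{L^{4r/(r-2)}_x}^4$ forces the four companion factors into $L_x^{4r/(r-2)}$, whose admissible time exponent in $d=1$ is $\tfrac{8r}{r+2}$; hence $\|u(s)\|_{L_x^{4r/(r-2)}}^4$ lies in $L_s^{2r/(r+2)}$, whose H\"older dual is $L_s^{2r/(r-2)}$. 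The weight $s^{-(\frac12-\frac1r)}=s^{-\frac{r-2}{2r}}$ produced by the $X$-norm factor belongs to weak $L_s^{2r/(r-2)}$ but \emph{not} to $L_s^{2r/(r-2)}$ (the relevant integral is $\int_0^{t/2}s^{-1}\,ds$), so the time-H\"older step diverges logarithmically. Hardy--Littlewood--Sobolev does not rescue this, since it returns an $L^q_t$ bound on the Duhamel term rather than the pointwise-in-$t$ bound you need; and scaling pins every exponent, so the ``narrow window'' you hope to exploit is in fact empty. Introducing a companion weighted quantity at a second exponent makes matters worse: putting all five factors in weighted sup-norms produces a total weight $s^{-\frac{3r+2}{2r}}$, which is not integrable near $s=0$.

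The missing ingredient is the Lorentz-space refinement of the Strichartz estimates. The paper first upgrades \eqref{S bdd} to $\|u\|_{L_t^{8r/(r+2),2}L_x^{4r/(r-2)}(\R\times\R)}\le C(\|u_0\|_{L^2_x})$ via Proposition~\ref{P:Lorentz Strichartz} and a two-step bootstrap (Lemma~\ref{L:L 1d}), and then uses H\"older in Lorentz spaces (Lemma~\ref{L:Holder}) to pair the fourth power, which lies in $L_s^{2r/(r+2),1}$, against the weight in $L_s^{2r/(r-2),\infty}$. With that ingredient your steps (1)--(4) go through essentially as you describe, with no companion weighted quantity needed; also, $r=\infty$ causes no difficulty, since $|t-s|^{-1/2}$ is locally integrable in one time dimension and the $s\in[t/2,t]$ piece is handled by putting the kernel, rather than the weight $s^{-1/2}$, in the weak Lorentz slot. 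Without the Lorentz refinement, step (3) fails.
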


\begin{theorem}\label{T:2d}
Fix $2<r<\infty$.  Given $u_0\in L^2(\R^2)\cap L^{r'}(\R^2)$, let $u$ denote the global solution to \eqref{NLS} guaranteed by Theorem~\ref{T:Dodson}.  Then
\begin{align}\label{decay 2d}
\sup_{t\neq 0}\, |t|^{2(\frac12-\frac1r)}\|u(t)\|_{L^r_x} \leq C\bigl(\|u_0\|_{L^2_x}\bigr) \|u_0\|_{L_x^{r'}} .
\end{align}
\end{theorem}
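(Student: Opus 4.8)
The proof runs parallel to that of Theorem~\ref{T:1d}. By the time-reversal symmetry of \eqref{NLS} we may restrict to $t>0$, and it is convenient to abbreviate $\sigma:=2\bigl(\tfrac12-\tfrac1r\bigr)\in(0,1)$. The first step is a reduction to small data. The estimate \eqref{decay 2d} is invariant under the mass-critical scaling $u\mapsto u_\lambda$; since $M(u_\lambda)=M(u)$ while $\|u_{0,\lambda}\|_{L^{r'}_x}=\lambda^{-\sigma}\|u_0\|_{L^{r'}_x}$, choosing $\lambda$ so that the rescaled datum satisfies $\|u_{0,\lambda}\|_{L^{r'}_x}=\eps_0$ reduces matters to proving that $\sup_{t>0}t^{\sigma}\|u(t)\|_{L^r_x}\le C(\|u_0\|_{L^2_x})$ for all data with $\|u_0\|_{L^{r'}_x}\le\eps_0$, where $\eps_0=\eps_0(\|u_0\|_{L^2_x})>0$ is to be chosen small. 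In addition, from Theorem~\ref{T:Dodson} and the standard Strichartz bootstrap (the two-dimensional analogue of the argument for \eqref{1d S intermediary}) one obtains $\|u\|_{L^q_tL^p_x(\R\times\R^2)}\le C(\|u_0\|_{L^2_x})$ for every admissible pair $(q,p)$; in particular $\|u\|_{L^4_{t,x}}\le C(\|u_0\|_{L^2_x})$, so $\R$ decomposes into $\lesssim_{\|u_0\|_{L^2_x},\eta}1$ intervals on each of which $\|u\|_{L^4_{t,x}}\le\eta$.

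The core of the argument is a bootstrap for $D(T):=\sup_{0<t\le T}t^{\sigma}\|u(t)\|_{L^r_x}$, which is finite for each $T>0$ by the local theory. Inserting Duhamel's formula $u(t)=e^{it\Delta}u_0\mp i\int_0^t e^{i(t-s)\Delta}(|u|^2u)(s)\,ds$ and bounding the linear term by \eqref{L disp est'} gives $\|e^{it\Delta}u_0\|_{L^r_x}\lesssim t^{-\sigma}\|u_0\|_{L^{r'}_x}\le\eps_0 t^{-\sigma}$. For the Duhamel term I would apply \eqref{L disp est'} slicewise, reducing the task to bounding $\int_0^t(t-s)^{-\sigma}\|u(s)\|_{L^{3r'}_x}^{3}\,ds$, and split the $s$-integral according to whether $s$ lies near $0$, in the bulk $[1,t/2]$, or near $t$. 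For $s\in(0,1]$ — and, more generally, for $t$ in any bounded range — the contribution is governed purely by the Strichartz bounds above: interpolating the admissible estimates down to the non-admissible space $L^3_tL^{3r'}_x$, which is legitimate on a bounded time interval by H\"older in time against an admissible pair $(q_0,3r')$ with $q_0>3$, produces a bound $\lesssim_{\|u_0\|_{L^2_x}}t^{-\sigma}$. In the remaining regimes I would estimate $\|u(s)\|_{L^{3r'}_x}$ by interpolating the conserved mass against the bootstrap hypothesis $\|u(s)\|_{L^r_x}\le D(T)s^{-\sigma}$, obtaining $\|u(s)\|_{L^{3r'}_x}^{3}\lesssim_{\|u_0\|_{L^2_x}}D(T)^{3\theta}s^{-3\sigma_{3r'}}$ for a suitable $\theta\in(0,1)$, where $\sigma_{3r'}=2\bigl(\tfrac12-\tfrac1{3r'}\bigr)$ and, crucially, $3\sigma_{3r'}=1+\tfrac2r>1$. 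Since this power exceeds $1$, $\int_1^\infty s^{-3\sigma_{3r'}}\,ds<\infty$; combining this with $(t-s)^{-\sigma}\sim t^{-\sigma}$ for $s\le t/2$ and $\int_{t/2}^t(t-s)^{-\sigma}\,ds\lesssim t^{1-\sigma}$, the bulk and near-$t$ contributions come out $\lesssim_{\|u_0\|_{L^2_x}}D(T)^{3\theta}\,t^{-\sigma}$ (the near-$t$ piece in fact decaying like $t^{-1}\le t^{-\sigma}$).

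Closing the bootstrap is the crux, and is where the mass-critical scaling makes itself felt: the Duhamel contribution is \emph{cubic}, hence superlinear, in $D(T)$ (indeed $3\theta=\tfrac{r+2}{r-2}>1$), the coefficients produced above depend on $\|u_0\|_{L^2_x}$ and need not be small, and the decay integral $\int s^{-3\sigma_{3r'}}\,ds$ converges only marginally, $3\sigma_{3r'}$ being barely larger than $1$. The plan is to recover the missing smallness from the global spacetime control \eqref{S bdd}--\eqref{scat}: one subdivides the bulk integral at a large threshold $T_*=T_*(\|u_0\|_{L^2_x})$ and across the good intervals, using both $\int_{T_*}^\infty s^{-3\sigma_{3r'}}\,ds\to0$ and the $L^4_{t,x}$-smallness on each good window to make the coefficient of $D(T)^{3\theta}$ so small that, under the bootstrap hypothesis $D(T)\le C(\|u_0\|_{L^2_x})$, this term is reabsorbed; a standard continuity argument then yields the uniform bound. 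I expect this quantitative bookkeeping — coaxing the borderline integrals and the (mass-dependent) interval count to cooperate — to be the main obstacle. A secondary difficulty is the range $2<r<4$, where $3r'>r$, so that $\|u(s)\|_{L^{3r'}_x}$ \emph{cannot} be obtained by interpolating the mass against the $L^r$-decay; there I would run the bootstrap simultaneously for a family of exponents reaching up to $3r'<6$ (or invoke Strichartz estimates with data in $L^{r'}$), which is exactly enough to control the nonlinearity.
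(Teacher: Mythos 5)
Your overall architecture (Duhamel, dispersive estimate, a bootstrap norm $D(T)=\sup t^{\sigma}\|u(t)\|_{L^r_x}$, decomposition of time into finitely many good windows) matches the paper's, and the scaling reduction and exponent arithmetic ($3\sigma_{3r'}=1+\tfrac2r$, $3\theta=\tfrac{r+2}{r-2}$) are correct. But the key estimate is set up differently, and the difference is fatal. The paper splits the nonlinearity as $\||u|^2u\|_{L^{r'}_x}\le\|u\|_{L^r_x}\|u\|_{L^q_x}^2$ with $q=\tfrac{2r}{r-2}$, converting only \emph{one} factor into $X(\cdot)\,s^{-\sigma}$ and estimating the other two in the Lorentz--Strichartz norm $L^{r,2}_tL^q_x$ of Lemma~\ref{L:L 2d}; the weight $s^{-\sigma}$ lies in $L^{\frac{r}{r-2},\infty}_s$ globally, and the Lorentz refinement is exactly what rescues the borderline H\"older pairing. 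This keeps the right-hand side \emph{linear} in $X$, so the current-window term $\eta^2X(T_j)$ is absorbed once $\eta$ defeats an absolute constant, and the non-small constant multiplies $X(T_{j-1})$, which is handled by induction over windows. Your splitting $\||u|^2u\|_{L^{r'}_x}=\|u\|_{L^{3r'}_x}^3$ with all three factors interpolated against the mass makes the Duhamel term of degree $3\theta=\tfrac{r+2}{r-2}>1$ in $D(T)$, and none of your proposed sources of smallness controls its coefficient: (i) rescaling so that $\|u_0\|_{L^{r'}_x}=\eps_0$ leaves the mass, hence every Strichartz constant $C(\|u_0\|_{L^2_x})$, unchanged — in particular your $s\in(0,1]$ region contributes a non-small constant to $D(T)$, so $D(T)\gtrsim C(\|u_0\|_{L^2_x})$ cannot be assumed small; (ii) the intermediate region $s\in[1,T_*]$ carries a coefficient $C(\|u_0\|_{L^2_x},T_*)$ with no smallness whatsoever; (iii) $L^4_{t,x}$-smallness on each good window does not survive summation over the $J=J(\|u_0\|_{L^2_x},\eta)$ windows, since $J\to\infty$ as $\eta\to0$. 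Absorbing $\eps\,D^{3\theta}$ into $D$ requires $\eps\lesssim D^{1-3\theta}$, i.e., $\eps$ small depending on the (mass-dependent, non-small) size of $D$, which is circular because $\eps$ must be fixed before the window decomposition that determines how large $D$ may become. This is precisely the trap the paper's linear-in-$X$ formulation avoids.

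The secondary difficulty you flag for $2<r<4$ is also not repaired by your proposed fix. Running the bootstrap at the exponent $3r'$ would require $u_0\in L^{(3r')'}_x$, and $(3r')'=\tfrac{3r}{2r+1}<r'$ exactly when $r<4$; interpolating $L^2\cap L^{r'}$ only reaches exponents in $[r',2]$, so this hypothesis is strictly stronger than what Theorem~\ref{T:2d} assumes. Moreover, the marginal convergence you rely on ($3\sigma_{3r'}>1$) is purchased at the cost of divergence of $\int_0 s^{-3\sigma_{3r'}}ds$ at $s=0$, which forces the local Strichartz patch that injects the non-small constant into $D(T)$ in point (i) above. The clean way out of both problems is the paper's: keep two of the three factors in a spacetime norm rather than converting them to pointwise decay, and upgrade that spacetime norm to the Lorentz scale $L^{r,2}_tL^q_x$ so that the remaining weight $s^{-\sigma}\in L^{\frac{r}{r-2},\infty}_s$ can be paired by the Lorentz H\"older inequality of Lemma~\ref{L:Holder}.
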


\begin{theorem}\label{T:3d}
Fix $2<r<6$.  Given $u_0\in L^2(\R^3)\cap L^{r'}(\R^3)$, let $u$ denote the global solution to \eqref{NLS} guaranteed by Theorem~\ref{T:Dodson}.  Then 
\begin{align}\label{decay 3d}
\sup_{t\neq 0}\, |t|^{3(\frac12-\frac1r)}\|u(t)\|_{L^r_x} \leq C\bigl(\|u_0\|_{L^2_x}\bigr) \|u_0\|_{L_x^{r'}}.
\end{align}
\end{theorem}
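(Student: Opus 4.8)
The plan is a continuity/bootstrap argument for the weighted norm. By the time‑reversal symmetry $u(t,x)\mapsto\overline{u(-t,x)}$ it suffices to bound $t^{\gamma}\|u(t)\|_{L^r_x}$ for $t>0$, where $\gamma:=3(\tfrac12-\tfrac1r)$. Observe that $2<r<6$ is exactly the condition $0<\gamma<1$; this is the only place the range of $r$ is used, and it is precisely the statement that the kernel $|t-s|^{-\gamma}$ of the inhomogeneous form of \eqref{L disp est'} is locally integrable in time (while $\gamma>0$ is what makes a quantitative conclusion possible at all). First I would upgrade Dodson's bound \eqref{S bdd} to control of every admissible Strichartz norm, $\|u\|_{L^p_tL^q_x(\R\times\R^3)}\le C(\|u_0\|_{L^2_x})$ with $\tfrac2p+\tfrac3q=\tfrac32$, $2\le p\le\infty$, together with its Lorentz refinement $u\in L^{p,2}_tL^q_x$ for $p>2$, by the usual subdivision‑in‑time argument (the three‑dimensional analogue of the passage to \eqref{1d S intermediary}). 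In particular, for any $\eta>0$ one can then partition $(0,\infty)$ into finitely many consecutive intervals $I_1,\dots,I_J$, with $J=J(\|u_0\|_{L^2_x},\eta)$, on which $\|u\|_{L^{10/3}_{t,x}(I_j\times\R^3)}\le\eta$. (The sign of the nonlinearity plays no role beyond Theorem~\ref{T:Dodson}.)

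Set $\Lambda(T):=\sup_{0<t\le T}t^{\gamma}\|u(t)\|_{L^r_x}$. A preliminary point is to verify, by a short‑time contraction argument in $C_tL^2_x$ intersected with $\{\,t^\gamma\|\cdot(t)\|_{L^r_x}\in L^\infty_t\,\}$, that $\Lambda(T)<\infty$ for each $T$, that $\Lambda$ is continuous, and that $\Lambda(0^+)=0$; the theorem is then the assertion $\Lambda(T)\le C(\|u_0\|_{L^2_x})\|u_0\|_{L^{r'}_x}$ uniformly in $T$. To run the bootstrap I would insert Duhamel's formula, estimate the free part by \eqref{L disp est'} as $\|e^{it\Delta}u_0\|_{L^r_x}\le C\,t^{-\gamma}\|u_0\|_{L^{r'}_x}$, and bound the Duhamel part, again by \eqref{L disp est'} under the integral, by $C\int_0^t|t-s|^{-\gamma}\|u(s)\|_{L^{7r'/3}_x}^{7/3}\,ds$. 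The heart of the matter is to control this last integral by $C(\|u_0\|_{L^2_x})\,t^{-\gamma}\bigl(\eta^{\sigma}\Lambda(T)+\Lambda_{\mathrm{prev}}+\|u_0\|_{L^{r'}_x}\bigr)$ for some $\sigma>0$, where $\Lambda_{\mathrm{prev}}$ denotes the (already controlled) value of $\Lambda$ on the partition intervals preceding the one containing $t$: this I would obtain by Hölder in space, distributing the $7/3$ copies of $u$ among $L^2_x$ (handled by conservation of mass), one or two admissible exponents $L^{q_i}_x$ with $q_i\in[2,6]$ (handled, after a time integration, by the intermediate Strichartz bounds), and $L^r_x$ (where one inserts $\|u(s)\|_{L^r_x}\le\Lambda(T)\,s^{-\gamma}$), followed by a weighted fractional‑integration estimate in time: one splits the $s$‑integral into the region $|t-s|\gtrsim t$ (where $|t-s|^{-\gamma}\approx t^{-\gamma}$ may be pulled out) and the region $|t-s|\lesssim t$ (where $|t-s|^{-\gamma}$, and likewise $s^{-\gamma}$ near $s=0$, are integrated using $\gamma<1$), and inside the latter region one isolates the short subinterval abutting $t$ on which the Strichartz norm of $u$ is $\le C\eta$.

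With such an estimate in hand, I would close the bootstrap by fixing $\eta$ so small that $C(\|u_0\|_{L^2_x})\eta^{\sigma}\le\tfrac12$ and arguing interval by interval along $I_1,\dots,I_J$: on $[0,T_{j+1}]$ the displayed bound and continuity of $\Lambda$ yield, after absorbing $\tfrac12\Lambda$, that $\Lambda$ on $[0,T_{j+1}]$ is controlled by $\Lambda$ on $[0,T_j]$ plus $C(\|u_0\|_{L^2_x})\|u_0\|_{L^{r'}_x}$, and iterating the finitely many steps gives $\Lambda(T)\le C(\|u_0\|_{L^2_x})\|u_0\|_{L^{r'}_x}$ uniformly in $T$. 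Together with the $t<0$ case this is the theorem.

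The main obstacle — and the reason this is genuinely harder than the corresponding scattering statement — is that $\sup_{0<t\le T}t^{\gamma}\|u(t)\|_{L^r_x}$ is not the norm of any fixed time slice, so it cannot be propagated across time in the familiar way of scattering arguments; all of the transfer of information over $[0,T]$ must be carried by the dispersive estimate itself, and the analysis therefore stays globally coupled. Compounding this, a scaling count forces the total power of the $L^r_x$‑factor in the Hölder decomposition above to be exactly $1$ and makes the resulting time integrals scaling critical, so everything has to be done at the endpoint: this requires the Lorentz refinements of Strichartz's inequality and of Hölder's inequality in time, and — most delicately — when $r$ is close to $2$ the Lebesgue exponents produced by the Hölder split leave the admissible range $[2,6]$, so one is forced to distribute $|u|^{4/3}u$ over several admissible spaces simultaneously and to track these exponents and their Lorentz indices with care. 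I expect this endpoint bookkeeping, together with the a priori finiteness and continuity of $\Lambda$, to be where the real work lies.
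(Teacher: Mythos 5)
Your overall framework (the bootstrap quantity $\sup_{0<t\le T}t^{\gamma}\|u(t)\|_{L^r_x}$, the partition of $(0,\infty)$ into finitely many intervals of small Strichartz norm, the split of the Duhamel integral into $s\in[0,t/2]$ and $s\in[t/2,t]$, and the dispersive estimate for the free term) matches the paper's. The gap is precisely in the step you call ``the heart of the matter,'' and it is fatal as proposed: the endpoint Lorentz bookkeeping that closes the argument in $d=1,2$ cannot close in $d=3$ because the power of the nonlinearity is too low. Concretely, once you put total power $c$ of $u$ into $L^r_x$ (converted into $s^{-\gamma c}\Lambda^c$) and the remaining power $\tfrac73-c$ into $L^2_x$ and admissible pairs $L^{p_i}_tL^{q_i}_x$, the resulting $s$-integral is scale invariant, so you must pair $s^{-\gamma c}\in L^{1/(\gamma c),\infty}_s$ against $\prod_i\|u(s)\|_{L^{q_i}_x}^{b_i}$ by H\"older in Lorentz spaces and land in $L^{1,1}_s=L^1_s$. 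The mass factors contribute only $L^\infty_t$ (second Lorentz index $\infty$), so the second indices must satisfy $\sum_i b_i/2\ge 1$, i.e.\ $\sum_i b_i\ge 2$; equivalently, a factor $\|u(s)\|_{L^{q}_x}^{4/3}$ needs $u\in L^{p,4/3}_tL^{q}_x$, whereas Proposition~\ref{P:Lorentz Strichartz} only gives second index $2$ and $L^{p,2}\not\subset L^{p,4/3}$ by \eqref{nesting}. With your normalization $c=1$ the available Strichartz power is $\sum_i b_i\le\tfrac43<2$, so the time H\"older lands only in $L^{1,3/2}_s\supsetneq L^1_s$ and the integral is not controlled. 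Lowering $c$ to restore $\sum b_i\ge2$ forces $c\le\tfrac13$, which makes the bootstrap inequality sublinear in $\Lambda$, destroys the linear dependence on $\|u_0\|_{L^{r'}_x}$, and breaks the interval-by-interval iteration. This is exactly the obstruction the introduction flags when it says the Lorentz approach is ``inapplicable due to the low power of the nonlinearity'' in three dimensions; also note that $r<6$ is used for more than $\gamma<1$ (e.g.\ admissibility of $(\tfrac{28r}{3(r+6)},\tfrac{7r}{3(r-1)})$).

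The paper's proof escapes this with devices absent from your outline. It introduces two further cutoffs $A,B$, writing $\int_0^{t/2}=\int_0^{t/2\wedge A}+\int_{t/2\wedge A}^{t/2}$ and $\int_{t/2}^{t}=\int_{t/2}^{t/2\vee(t-B)}+\int_{t/2\vee(t-B)}^{t}$. Near $s=0$ and $s=t$ it avoids the $L^r_x$ norm of $u$ altogether, using only global Strichartz norms and non-endpoint H\"older in time at the price of factors $A^{+}$ and $B^{-}$; on the complementary pieces it interpolates $\|u(s)\|_{L^{7r/(3r-1)}_x}$ between $L^r_x$ (weighted by $s^{-\gamma}$) and $L^2_x$ or $L^6_x$ according to whether $r$ lies in $(2,\tfrac83)$, $[\tfrac83,4)$, or $[4,6)$, producing $X^{\delta}$ with $\delta\neq1$ times $A^{-}$ or $B^{+}$. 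Optimizing over $A$ and $B$ then returns a bound that is exactly linear in $X$ — this is how linearity is recovered without the endpoint Lorentz H\"older. A second missing ingredient is the interpolated linear estimate in Lemma~\ref{L:3adj} (complex interpolation between the Keel--Tao endpoint and the $L^1\to L^\infty$ dispersive bound), which is what handles the region $s$ near $t$. Without these (or equivalent) mechanisms, your plan does not close.
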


We feel that these estimates embody the ideal nonlinear version of \eqref{L disp est'}, at least for the range of parameters treated: We make no auxiliary assumptions besides finiteness of the critical norm, which is key for the existence of solutions.  Moreover, the non-critical norm appears linearly on the right-hand side, just as in the linear estimate \eqref{L disp est'}.  As we will see, these two features distinguish our results from prior works in this direction.

Our results in one and two spatial dimensions will be proved by parallel methods, using crucially Lorentz-space improvements of the traditional Strichartz inequality; see Proposition~\ref{P:Lorentz Strichartz}.  The two-dimensional case is treated in Section~\ref{S:3}, which serves as a model for the one-dimensional case in Section~\ref{S:4}.  In three dimensions, however, this approach is inapplicable due to the low power of the nonlinearity.  Our argument in the three-dimensional case is presented in Section~\ref{S:5}.  It relies on a subtle decomposition of the nonlinearity and the unusual linear estimates presented in Lemma~\ref{L:3adj}.

Neither of the methods presented here appear to be capable of demonstrating decay that is $O(|t|^{-1})$ or faster without additional assumptions on the initial data beyond $u_0\in L^2_x\cap L^{r'}_x$.  This underlies the exclusion of $L^\infty_x$ decay in two space dimensions and the limitation $r<6$ in three dimensions.

\subsection*{Prior work} As mentioned earlier, quantitative decay pointwise in time has a long history in dispersive PDE.  Initially, this was achieved only under strong regularity and decay hypotheses; see, for example, \cite{MR0877907,MR0784477,MR0680085,MR0515228,MR0303097,MR0733654} as well as the references therein.

Subsequent progress has significantly reduced the regularity and/or decay properties needed in order to construct local solutions.  The earlier role of pointwise in time bounds has been overtaken by $L^q_t L^r_x$ bounds of the type \eqref{S bdd}, which capture both the local-in-time regularity needed to construct solutions and the long-time decay needed to prove scattering.  The time is thus ripe for understanding dispersive decay in the sense of the classical dispersive estimate \eqref{L disp est'} under minimal hypotheses.  This gauntlet has been taken up by several different authors \cite{FSZ,MR4251840,MR4576877,Looi}.

The paper \cite{FSZ} focuses on the cubic nonlinear Sch\"odinger equation in $\R^3$ with initial data in $H^1$.  Although not scaling-critical, mass and energy conservation make this a physically natural class of initial data.  The papers \cite{MR4251840,MR4576877} focus on the energy-critical nonlinear Schr\"odinger equation, both requiring initial data in $H^3(\R^3)$ in order to demonstrate full $L^\infty_x$ dispersive decay.  None of these papers demonstrate linear dependence on $\| u_0\|_{L^{r'}_x}$.

The paper \cite{Looi} studies the energy-critical wave equation under significantly stronger regularity and decay hypotheses; however, this paper considers wave equations associated to rather general spacetime metrics in connection with the problem of describing late-time tails in general relativity.

Based on the available evidence, it is natural to imagine that an analogue of our theorems should hold for any nonlinear wave or Schr\"odinger equation that admits scaling-critical spacetime bounds strong enough to imply scattering.  We are not aware of any obstructions to this notion and hope this paper will stimulate further activity on this question. 


\subsection*{Acknowledgements} C. F. was partially supported by the National Key R\&D Program of China, 2021YFA1000800, CAS Project for Young Scientists in Basic Research
Grant No. YSBR-031, NSFC grant No. 12288201, and a fund from AMSS. R.K. was supported by U.S. NSF grant DMS-2154022. M.V. was supported by U.S. NSF grant DMS-2054194.  Z. Z. was supported by the NSF grant of China (No. 12101046, 12271032) and the Beijing Institute of Technology Research Fund Program for Young Scholars.

\section{Notation and useful lemmas}\label{S:2}

We write $A\lesssim B$ to indicate $A\leq CB$ with an absolute constant $C$ whose specific value is of no consequence.  We write $A\sim B$ when $A\lesssim B$ and $B\lesssim A$.  The maximum and minimum of two numbers $s$ and $t$ are abbreviated $s\vee t$ and $s\wedge t$, respectively.

In our treatment of the one- and two-dimensional Schr\"odinger equations, we will employ refinements of the dispersive estimates in Lorentz spaces.  We review the definitions and basic tools below; for a textbook treatment of Lorentz spaces, see~\cite{CFA}. 

\begin{definition}[Lorentz spaces]
Fix $d\geq 1$, $1\leq p<\infty$, and $1\leq q\leq \infty$.  The Lorentz space $L^{p,q}(\R^d)$ is the space of measurable functions $f:\R^d\to \C$ for which the quasinorm
$$
\|f\|_{L^{p,q}(\R^d)}^*=p^{\frac1q}\bigl\| \lambda \bigl| \{x\in \R^d:\, |f(x)|>\lambda\}\bigr|^{\frac1p}\bigr\|_{L^q((0, \infty), \frac{d\lambda}{\lambda})}
$$
is finite.  Here, $|A|$ denotes the Lebesgue measure of the set $A\subseteq\R^d$.
\end{definition}

Note that $L^{p,p}(\R^d)$ coincides with the standard Lebesgue space $L^p(\R^d)$, while $L^{p,\infty}(\R^d)$ coincides with the weak $L^p(\R^d)$ space.  

Lorentz spaces satisfy the following nesting property: $L^{p,q}(\R^d)\subset L^{p,r}(\R^d)$ whenever $1\leq p<\infty$ and $1\leq q<r\leq \infty$.  Indeed, there exists a constant $C(p,q,r)$ so that
\begin{align}\label{nesting}
\big\|f\big\|_{L^{p,r}(\R^d)}\leq C(p,q,r)\big\|f\big\|_{L^{p,q}(\R^d)}.
\end{align}

H\"older's inequality carries over to Lorentz spaces in the following form:

\begin{lemma}[H\"older inequality in Lorentz spaces]\label{L:Holder}
Given $1\leq p, p_1, p_2<\infty$ and $1\leq q, q_1, q_2\leq \infty$ such that $\frac1p=\frac1{p_1} + \frac1{p_2}$ and $\frac1q=\frac1{q_1} + \frac1{q_2}$, we have
\begin{align*}
\|fg\|_{L^{p,q}(\R^d)} \lesssim \|f\|_{L^{p_1,q_1}(\R^d)}\|g\|_{L^{p_2,q_2}(\R^d)},
\end{align*}
with the implicit constant depending only on the parameters $d, p, p_1, p_2, q, q_1, q_2$.
\end{lemma}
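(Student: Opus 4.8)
The plan is to reduce the inequality, via the decreasing rearrangement, to two applications of the classical Hölder inequality. For a measurable $h\colon\R^d\to\C$, write $h^*\colon(0,\infty)\to[0,\infty]$, $h^*(s)=\inf\{\lambda>0:|\{x:|h(x)|>\lambda\}|\le s\}$, for its decreasing rearrangement. The first step is to record the identity
\[
\|h\|^*_{L^{p,q}(\R^d)}=\bigl\|s^{1/p}\,h^*(s)\bigr\|_{L^q((0,\infty),\,ds/s)},
\]
valid for all $1\le p<\infty$ and $1\le q\le\infty$; this is a layer-cake computation, rewriting the $\lambda$-integral in the definition as an $s$-integral by Fubini and using that $h$ and $h^*$ are equimeasurable. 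Since $\|\cdot\|_{L^{p,q}}$ is comparable to $\|\cdot\|^*_{L^{p,q}}$ with constants depending only on $p$ and $q$, it suffices to prove the asserted bound with every Lorentz quasinorm replaced by the right-hand side above.

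The second step is the sub-multiplicativity of the rearrangement: for any $s_1,s_2>0$,
\[
(fg)^*(s_1+s_2)\le f^*(s_1)\,g^*(s_2),
\]
which is immediate from the definition, since $|f|\le\lambda_1$ off a set of measure $\le s_1$ and $|g|\le\lambda_2$ off a set of measure $\le s_2$ force $|fg|\le\lambda_1\lambda_2$ off a set of measure $\le s_1+s_2$; taking the infimum over admissible $\lambda_1,\lambda_2$ gives the claim. I would use this with $s_1=s_2=s/2$, so that $(fg)^*(s)\le f^*(s/2)\,g^*(s/2)$. The third step is to split $s^{1/p}=s^{1/p_1}s^{1/p_2}$ — legitimate since $\tfrac1p=\tfrac1{p_1}+\tfrac1{p_2}$ — and apply Hölder's inequality on the measure space $((0,\infty),\,ds/s)$ with the exponent pair $(q_1,q_2)$, admissible because $\tfrac1q=\tfrac1{q_1}+\tfrac1{q_2}$ and valid verbatim when any of $q_1,q_2,q$ is infinite:
\[
\bigl\|s^{1/p}(fg)^*(s)\bigr\|_{L^q(ds/s)}\le\bigl\|s^{1/p_1}f^*(s/2)\bigr\|_{L^{q_1}(ds/s)}\,\bigl\|s^{1/p_2}g^*(s/2)\bigr\|_{L^{q_2}(ds/s)}.
\]
A final change of variables $s\mapsto 2s$, under which $ds/s$ is invariant, turns the $f$-factor into $2^{1/p_1}\|f\|^*_{L^{p_1,q_1}}$ and the $g$-factor into $2^{1/p_2}\|g\|^*_{L^{p_2,q_2}}$; combining with the first step yields the lemma, with constant $2^{1/p}\le2$, in particular independent of $d$.

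I do not expect a genuine obstacle; the statement is classical and each ingredient is elementary. The point that most deserves care is the identity in the first step, matching the distribution-function definition of $\|\cdot\|^*_{L^{p,q}}$ used here with the rearrangement form; this is routine Fubini but must be written out or cited. I should also note that $\|\cdot\|^*_{L^{p,q}}$ is genuinely only a quasinorm when $q>p$, so the triangle inequality is unavailable — but since the conclusion is an inequality only up to a multiplicative constant, this costs nothing, and in particular there is no need to replace $h^*$ by the average $h^{**}(s)=\tfrac1s\int_0^s h^*(\sigma)\,d\sigma$ in order to recover subadditivity. If one wished to avoid rearrangements entirely, an alternative would be to identify $L^{p,q}$ as a real-interpolation space between Lebesgue spaces and use the behavior of the $K$-functional under products; the rearrangement argument above is shorter and self-contained.
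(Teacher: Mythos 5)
Your proof is correct. Note that the paper itself does not supply a proof of this lemma; it states the result and points to the textbook \cite{CFA}, so there is no argument in the paper to compare against. What you have written is the standard rearrangement proof (essentially O'Neil's): rewrite the quasinorm via $f^*$, use the sub-multiplicativity $(fg)^*(s_1+s_2)\le f^*(s_1)g^*(s_2)$ with $s_1=s_2=s/2$, split the weight $s^{1/p}=s^{1/p_1}s^{1/p_2}$, apply the generalized H\"older inequality on $\bigl((0,\infty),ds/s\bigr)$ with exponents $(q_1,q_2)$, and undo the dilation $s\mapsto s/2$. Every step is sound, the generalized H\"older step is valid for $1/q=1/q_1+1/q_2$ including infinite exponents, and the final constant $2^{1/p}$ is indeed independent of $d$.

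Two minor remarks. First, the identity in your first step is in fact an exact equality rather than a two-sided comparison: with the paper's normalization, a layer-cake/Fubini computation gives
\begin{equation*}
p^{1/q}\bigl\|\lambda\,|\{|h|>\lambda\}|^{1/p}\bigr\|_{L^q(d\lambda/\lambda)}=\bigl\|s^{1/p}h^*(s)\bigr\|_{L^q(ds/s)},
\end{equation*}
with the $q=\infty$ case being the elementary equivalence $h^*(t)>\lambda\iff d_h(\lambda)>t$. Saying ``comparable'' is harmless here, but the sharper statement is free. Second, your parenthetical about when $\|\cdot\|^*_{L^{p,q}}$ is a norm is tangential and can simply be dropped; as you say, subadditivity plays no role in the argument, so no appeal to $h^{**}$ is needed.
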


Using the Hunt Interpolation Theorem \cite{Hunt64, Hunt66} to interpolate between the dispersive estimate for the Schr\"odinger propagator
\begin{align}\label{E dis}
\|e^{it\Delta}u_0\|_{L^\infty_x(\R^d)}\lesssim |t|^{-\frac d2} \|u_0\|_{L^1_x} \quad\text{for $t\neq 0$}
\end{align} 
and the conservation of mass
$$
\|e^{it\Delta}u_0\|_{L^2_x(\R^d)}=\|u_0\|_{L^2_x} \quad\text{for $t\in \R$},
$$ 
yields the following Lorentz improvement of the standard dispersive estimates: for any $2<r<\infty$ and $1\leq \theta\leq \infty$,
\begin{align}\label{L disp est}
\|e^{it\Delta}u_0\|_{L^{r, \theta}_x(\R^d)}\lesssim |t|^{-d(\frac12-\frac1r)} \|u_0\|_{L^{r', \theta}_x} \quad\text{for $t\neq 0$}.
\end{align}
Note that by taking $\theta=r>r'$ and invoking \eqref{nesting}, we recover the standard dispersive estimates \eqref{L disp est'}.

As observed in \cite{MR1856248}, applying the Hardy--Littlewood--Sobolev inequality in Lorentz spaces in the standard proof of the Strichartz estimates yields the following Lorentz-space improvement:

\begin{proposition}[Lorentz--Strichartz estimates]\label{P:Lorentz Strichartz}
Fix $d\geq 1$ and let $(p,q)$ be a Schr\"odinger-admissible pair, that is, $2\leq p,q\leq \infty$, $\frac2p+ \frac dq=\frac d2$, and $(d,p,q)\neq (2,2,\infty)$.  Then
\begin{align*}
\|e^{it\Delta}u_0\|_{L_t^{p,2} L^q_x(\R\times\R^d)} &\lesssim \|u_0\|_{L^2_x(\R^d)},\\
\Bigl\| \int_0^t e^{i(t-s)\Delta} F(s)\, ds\Bigr\|_{L_t^{p,2} L^q_x(\R\times\R^d)}&\lesssim \|F\|_{L_t^{p',2} L_x^{q'}(\R\times\R^d)}.
\end{align*}
\end{proposition}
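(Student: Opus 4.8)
The plan is to follow the classical route to Strichartz estimates but to run every inequality in its Lorentz-refined form, exactly as indicated in the statement. The proof is a $TT^*$ argument combined with the Hardy--Littlewood--Sobolev inequality in Lorentz spaces, so the key point is to track the second Lorentz index carefully throughout.

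First I would record the dispersive bound in Lorentz form, which is precisely \eqref{L disp est}: for $2<q<\infty$ and $1\le\theta\le\infty$,
$$
\|e^{i(t-s)\Delta}g\|_{L^{q,\theta}_x}\lesssim |t-s|^{-d(\frac12-\frac1q)}\|g\|_{L^{q',\theta}_x},
$$
together with mass conservation for the endpoint $q=2$. Next I would set up the dual operator: for the inhomogeneous estimate it suffices to bound
$$
\Bigl\|\int_{\R} e^{i(t-s)\Delta}F(s)\,ds\Bigr\|_{L^{p,2}_t L^q_x}
$$
(the truncated integral $\int_0^t$ is controlled by the untruncated one via the Christ--Kiselev lemma, valid since $p>2$ on the Schr\"odinger-admissible line away from the forbidden endpoint; in the endpoint case $(p,q)=(2,\frac{2d}{d-2})$ one argues differently, but note that case is excluded in $d=1,2$ by $q<\infty$ and is the Keel--Tao endpoint in $d\ge3$). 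Applying Minkowski's inequality in $x$ and then the Lorentz dispersive bound inside the $s$-integral gives, pointwise in $t$,
$$
\Bigl\|\int_{\R} e^{i(t-s)\Delta}F(s)\,ds\Bigr\|_{L^{q}_x}
\lesssim \int_{\R} |t-s|^{-d(\frac12-\frac1q)}\,\|F(s)\|_{L^{q'}_x}\,ds.
$$
Now I take the $L^{p,2}_t$ quasinorm of both sides. The convolution kernel $|t|^{-d(\frac12-\frac1q)}$ lies in the weak space $L^{p/2,\infty}_t$ since $d(\frac12-\frac1q)=\frac2p$ by admissibility; hence the one-dimensional Hardy--Littlewood--Sobolev inequality \emph{in Lorentz spaces} (see \cite{MR1856248, CFA}) applies. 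The sharp form states that convolution with an element of $L^{p/2,\infty}$ maps $L^{p',2}$ into $L^{p,2}$: with exponents $\frac1{p}+1=\frac2{p}+\frac1{p'}$ and second indices $2$ on input and output (using $\tfrac12 = \tfrac1\infty+\tfrac12$), one gets
$$
\Bigl\| \int_{\R} |t-s|^{-\frac2p}\|F(s)\|_{L^{q'}_x}\,ds \Bigr\|_{L^{p,2}_t}
\lesssim \bigl\| \|F(s)\|_{L^{q'}_x}\bigr\|_{L^{p',2}_t}
= \|F\|_{L^{p',2}_t L^{q'}_x}.
$$
This proves the inhomogeneous estimate. For the homogeneous estimate, I would deduce it from the inhomogeneous one by the standard duality/$TT^*$ trick: the map $u_0\mapsto e^{it\Delta}u_0$ from $L^2_x$ to $L^{p,2}_tL^q_x$ is bounded if and only if $T^*T=\int_{\R}e^{i(t-s)\Delta}(\cdot)(s)\,ds$ is bounded from $L^{p',2}_tL^{q'}_x$ to $L^{p,2}_tL^q_x$, and we have just established the latter (the untruncated version); the only subtlety is that $L^{p,2}$ is a normed space for $p>1$ so duality is legitimate, with $(L^{p',2})^* = L^{p,2}$.

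The main obstacle I anticipate is not conceptual but bookkeeping: one must verify that the Lorentz second indices line up at every step --- in particular, that the Hardy--Littlewood--Sobolev inequality in the Lorentz scale genuinely improves the output from $L^{p,p}$ to $L^{p,2}$, which is exactly the gain coming from the fact that the kernel sits in $L^{p/2,\infty}$ (only weak-$L^{p/2}$, never in $L^{p/2}$) while we are allowed the better index $2$ on the input. A secondary point requiring care is the endpoint $p=2$ in dimensions $d\ge3$, where the Christ--Kiselev truncation argument fails; there one either invokes the Keel--Tao endpoint machinery directly in its Lorentz-refined form, or simply notes that for the applications in this paper (spatial dimensions $1,2,3$) the needed admissible pairs can be taken with $p>2$, so the truncation step is unproblematic. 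Everything else --- Minkowski in $x$, interpolation to get \eqref{L disp est}, the abstract $TT^*$ equivalence --- is routine.
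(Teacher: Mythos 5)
Your proof is correct and implements precisely the route the paper indicates: the paper does not give a proof of Proposition~\ref{P:Lorentz Strichartz} but simply attributes it to \cite{MR1856248} with the remark that one ``applies the Hardy--Littlewood--Sobolev inequality in Lorentz spaces in the standard proof of the Strichartz estimates.'' Your $TT^*$ argument, with the one-dimensional kernel $|t|^{-2/p}\in L^{p/2,\infty}_t$ and the Lorentz-refined Young/HLS inequality giving $L^{p/2,\infty}*L^{p',2}\hookrightarrow L^{p,2}$ (exponents $\frac1p+1=\frac2p+\frac1{p'}$, second indices $\frac12\le\frac1\infty+\frac12$), is exactly that argument, and your bookkeeping of the second Lorentz index is the key verification.

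One small inaccuracy worth flagging: after Minkowski in $x$, your display uses the \emph{standard} $L^{q'}_x\to L^{q}_x$ dispersive bound \eqref{L disp est'}, not the Lorentz-space refinement \eqref{L disp est} that you announce. That is in fact correct and sufficient --- the Lorentz gain in the conclusion lives entirely in the time variable, coming from HLS in Lorentz spaces applied to the $s$-integral, and no Lorentz structure in $x$ enters. So the reference to \eqref{L disp est} is a harmless red herring; deleting it would make the argument cleaner. Your remarks on the Keel--Tao endpoint and Christ--Kiselev truncation are appropriate; one might also simply observe that when $p=2$ the space $L^{2,2}_t$ coincides with $L^2_t$, so the Lorentz refinement is vacuous there and the proposition reduces to the classical endpoint Strichartz estimate of \cite{KT}.
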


The following two lemmas record spacetime bounds in Lorentz spaces for solutions to \eqref{NLS}.  As we will show below, these follow from the global spacetime bounds in Theorem~\ref{T:Dodson} and Proposition~\ref{P:Lorentz Strichartz}.  While the arguments we present can be adapted to derive Lorentz spacetime bounds for solutions to \eqref{NLS} in all dimensions $d\geq 1$, we only present the details in the settings that are relevant to our applications.  Specifically, these Lorentz improvements constitute a key ingredient in the proofs of Theorems~\ref{T:1d} and \ref{T:2d} and so we present the details for $d=1,2$ only. 

\begin{lemma}[Lorentz spacetime bounds $d=1$]\label{L:L 1d}
Given $u_0\in L^2(\R)$ satisfying the hypotheses of Theorem~\ref{T:Dodson}, let $u$ denote the unique global solution to \eqref{NLS} guaranteed by that theorem.  For any $2<r\leq \infty$ we have the global spacetime bound
\begin{align}\label{Lorentz bdd 1d}
\|u\|_{L_t^{\frac{8r}{r+2},4}L_x^{\frac{4r}{r-2}}(\R\times\R)}\lesssim \|u\|_{L_t^{\frac{8r}{r+2},2}L_x^{\frac{4r}{r-2}}(\R\times\R)} \leq C(\|u_0\|_{L^2_x}).
\end{align}
\end{lemma}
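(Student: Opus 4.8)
The plan is to apply the Lorentz--Strichartz estimates of Proposition~\ref{P:Lorentz Strichartz} to the Duhamel formula for $u$ and to bound the resulting nonlinear term by a scaling-critical Strichartz norm of $u$ that Theorem~\ref{T:Dodson} already controls. First, the first inequality in \eqref{Lorentz bdd 1d} is just the nesting estimate \eqref{nesting} (comparing the time--Lorentz second indices $2<4$) applied to $t\mapsto\|u(t)\|_{L^{4r/(r-2)}_x}$, so it remains to bound the middle quantity by $C(\|u_0\|_{L^2_x})$. Set $(p,q):=\bigl(\tfrac{8r}{r+2},\tfrac{4r}{r-2}\bigr)$ and note that $\tfrac2p+\tfrac1q=\tfrac12$, so $(p,q)$ is Schr\"odinger-admissible in dimension one and Proposition~\ref{P:Lorentz Strichartz} applies to it. As the nonlinearity in $d=1$ is quintic, the Duhamel formula $u(t)=e^{it\Delta}u_0\mp i\int_0^te^{i(t-s)\Delta}\bigl(|u|^4u\bigr)(s)\,ds$ and Proposition~\ref{P:Lorentz Strichartz} give
\begin{equation*}
\|u\|_{L^{p,2}_tL^q_x(\R\times\R)}\lesssim\|u_0\|_{L^2_x}+\bigl\|\,|u|^4u\,\bigr\|_{L^{p',2}_tL^{q'}_x(\R\times\R)}.
\end{equation*}
The key point is that the nonlinear term will be dominated by a spacetime norm of $u$ different from the one on the left, so the estimate closes at once, with no continuity or absorption argument required.

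To carry this out, introduce the auxiliary pair $(\tilde p,\tilde q):=(5p',5q')=\bigl(\tfrac{40r}{7r-2},\tfrac{20r}{3r+2}\bigr)$; the identity $\tfrac2p+\tfrac1q=\tfrac12$ is equivalent to $\tfrac2{\tilde p}+\tfrac1{\tilde q}=\tfrac12$, so $(\tilde p,\tilde q)$ is again admissible, with $2<\tilde p,\tilde q<\infty$ for all $2<r\le\infty$. First, pointwise in $t$ one has $\bigl\|\,|u|^4u(t)\,\bigr\|_{L^{q'}_x}=\|u(t)\|_{L^{5q'}_x}^5=\|u(t)\|_{L^{\tilde q}_x}^5$. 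Next, writing this fifth power as a product of five copies of $\|u(\cdot)\|_{L^{\tilde q}_x}$, each placed in $L^{\tilde p,10}_t$ --- a legitimate application of H\"older in Lorentz spaces (Lemma~\ref{L:Holder}, iterated), since $5/\tilde p=1/p'$ and $5/10=1/2$ --- and then invoking the nesting estimate \eqref{nesting} in the time variable (valid as $\tilde p<10$), one obtains
\begin{equation*}
\bigl\|\,|u|^4u\,\bigr\|_{L^{p',2}_tL^{q'}_x(\R\times\R)}\lesssim\|u\|_{L^{\tilde p,10}_tL^{\tilde q}_x(\R\times\R)}^5\lesssim\|u\|_{L^{\tilde p}_tL^{\tilde q}_x(\R\times\R)}^5.
\end{equation*}
Finally, since $(\tilde p,\tilde q)$ is Schr\"odinger-admissible, Theorem~\ref{T:Dodson} together with the standard Strichartz inequalities, applied to the Duhamel formula for $u$ exactly as in the derivation of \eqref{1d S intermediary}, bounds $\|u\|_{L^{\tilde p}_tL^{\tilde q}_x(\R\times\R)}$ by $C(\|u_0\|_{L^2_x})$. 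The displayed inequalities then combine to give the claim.

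The individual steps are routine; what demands care is the bookkeeping of exponents: one must verify \emph{simultaneously} that the target pair $(p,q)$ is admissible (so that Proposition~\ref{P:Lorentz Strichartz} may be invoked, with the improved Lorentz second index $2$ on the left), that the five-fold H\"older relation balances in \emph{both} the underlying Lebesgue exponents \emph{and} the time--Lorentz second indices, and that the resulting auxiliary pair $(\tilde p,\tilde q)$ is itself admissible so that Theorem~\ref{T:Dodson} is available. It is precisely the Lorentz refinement of Strichartz --- the improvement of the time exponent from $L^{p'}_t$ to $L^{p',2}_t$ in Proposition~\ref{P:Lorentz Strichartz} --- that makes the second-index identity $5/10=1/2$ work; the purely Lebesgue version of this scheme does not close, as it would deliver only the weaker bound in $L^p_tL^q_x$. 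That all of these constraints are mutually compatible is a manifestation of the mass-critical scaling. (If one prefers a more hands-on route, the same conclusion follows by splitting $\R$ into finitely many intervals, their number depending only on $\|u_0\|_{L^2_x}$, on which a suitable Strichartz norm of $u$ is small, running the interval-localized version of the above estimates, absorbing the resulting small factor, and summing over the intervals; with the present choice of exponents, however, this elaboration is unnecessary.)
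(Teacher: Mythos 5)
Your proposal is correct, but it closes the Lorentz estimate by a genuinely different mechanism than the paper. The paper runs \emph{two} absorption arguments: it first establishes the auxiliary Lebesgue bound \eqref{1d S intermediary} for the pair $(\tfrac{16r}{3r-2},\tfrac{8r}{r+2})$, and then in \eqref{1d S'} the unknown Lorentz norm $\|u\|_{L_t^{8r/(r+2),2}L_x^{4r/(r-2)}}$ reappears on the right-hand side multiplied by $\|u\|_{L_t^{16r/(3r-2)}L_x^{8r/(r+2)}}^4$, so a second subdivision into intervals of small norm is needed to absorb it. You instead exploit the exact identity $\bigl\||u|^4u(t)\bigr\|_{L^{q'}_x}=\|u(t)\|_{L^{5q'}_x}^5$ to dominate the dual Lorentz norm of the nonlinearity by the fifth power of a \emph{single, already-controlled} Lebesgue--Strichartz norm, via the Lorentz--H\"older relation $\tfrac{5}{5p'}=\tfrac1{p'}$, $\tfrac{5}{10}=\tfrac12$ and the nesting $L^{\tilde p,\tilde p}_t\subset L^{\tilde p,10}_t$ (valid since $\tilde p=\tfrac{40r}{7r-2}<10$ for all $2<r\le\infty$). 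All the exponent checks go through: $(\tilde p,\tilde q)=(5p',5q')$ is admissible precisely because $(p,q)$ is and the nonlinearity is mass-critical, and $\tilde p,\tilde q$ stay in the admissible range uniformly in $r\in(2,\infty]$. What your route buys is the elimination of the second absorption step; what it still requires (and you correctly acknowledge) is the standard interval-subdivision bootstrap to control $\|u\|_{L^{\tilde p}_tL^{\tilde q}_x}$ from \eqref{S bdd}, exactly as in \eqref{1d S intermediary}, so the overall structure is one bootstrap rather than two. The trade-off is that your argument is specific to the diagonal (dual-pair) form of the inhomogeneous Lorentz--Strichartz estimate and to the fact that the quintic power of a single function factors cleanly through the Lorentz quasinorm, whereas the paper's two-stage scheme is the one that generalizes verbatim to $d=2$ in Lemma~\ref{L:L 2d}.
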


\begin{proof}
The first inequality in \eqref{Lorentz bdd 1d} follows immediately from \eqref{nesting}.  We thus turn to the second inequality in \eqref{Lorentz bdd 1d}.  In the computations that follow we will use the fact that for $2<r\leq\infty$, the pairs $(\frac{8r}{r+2},\frac{4r}{r-2})$, $(\frac{16r}{3r-2},\frac{8r}{r+2})$, and $(\frac{48r}{7r+6}, \frac{24r}{5r-6})$ are Schr\"odinger-admissible pairs. 

Using the Duhamel formula
$$
u(t) = e^{it\Delta}u_0 - i \int_0^t e^{i(t-s)\Delta} (|u|^4u)(s)\, ds
$$
together with the Strichartz and H\"older inequalities, we may estimate
\begin{align}\label{1d S}
\|u\|_{L_t^{\frac{16r}{3r-2}}L_x^{\frac{8r}{r+2}}(\R\times\R)} &\lesssim \|u_0\|_{L_x^2} + \||u|^4u\|_{L_t^{\frac{48r}{41r-6}}L_x^{\frac{24r}{19r+6}}(\R\times\R)}\notag\\
&\lesssim \|u_0\|_{L_x^2} +\|u\|_{L_t^{\frac{16r}{3r-2}}L_x^{\frac{8r}{r+2}}(\R\times\R)} \|u\|_{L_{t,x}^6(\R\times\R)}^4.
\end{align}
Using \eqref{S bdd} and subdividing $\R$ into finitely many subintervals $I_j$ on which 
$$
\|u\|_{L_{t,x}^6(I_j\times\R)}<\eta
$$
for $\eta>0$ small (depending only on the implicit constant in \eqref{1d S}), a standard bootstrap argument yields
\begin{align}\label{1d S intermediary}
\|u\|_{L_t^{\frac{16r}{3r-2}}L_x^{\frac{8r}{r+2}}(\R\times\R)}\leq C(\|u_0\|_{L_x^2}).
\end{align}

Applying Proposition~\ref{P:Lorentz Strichartz}, we may thus bound
\begin{align}\label{1d S'}
\|u\|_{L_t^{\frac{8r}{r+2},2}L_x^{\frac{4r}{r-2}}(\R\times\R)}
&\lesssim \|u_0\|_{L_x^2} + \||u|^4u\|_{L_t^{\frac{8r}{7r-2},2}L_x^{\frac{4r}{3r+2}}(\R\times\R)}\notag\\
&\lesssim \|u_0\|_{L_x^2} +\|u\|_{L_t^{\frac{8r}{r+2},2}L_x^{\frac{4r}{r-2}}(\R\times\R)}\|u\|_{L_t^{\frac{16r}{3r-2}}L_x^{\frac{8r}{r+2}}(\R\times\R)}^4.
\end{align}
Using \eqref{1d S intermediary} and subdividing $\R$ into finitely many subintervals $I_j$ on which 
$$
\|u\|_{L_t^{\frac{16r}{3r-2}}L_x^{\frac{8r}{r+2}}(I_j\times\R)}<\eta
$$
for $\eta>0$ small (depending only on the implicit constant in \eqref{1d S'}), we get \eqref{Lorentz bdd 1d}.
\end{proof}

\begin{lemma}[Lorentz spacetime bounds $d=2$]\label{L:L 2d}
Given $u_0\in L^2(\R^2)$ satisfying the hypotheses of Theorem~\ref{T:Dodson}, let $u$ denote the unique global solution to \eqref{NLS} guaranteed by that theorem.  For any $2<r<\infty$ we have the global spacetime bound
\begin{align}\label{Lorentz bdd}
\|u\|_{L_t^{r,2}L_x^{\frac{2r}{r-2}}(\R\times\R^2)} \leq C(\|u_0\|_{L_x^2}).
\end{align}
\end{lemma}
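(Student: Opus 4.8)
The proof parallels that of Lemma~\ref{L:L 1d}, now with the cubic nonlinearity $|u|^2u$ in place of $|u|^4u$, and is carried out in two stages. The only Schr\"odinger-admissible pairs needed are $(r,\tfrac{2r}{r-2})$ and its ``swap'' $(\tfrac{2r}{r-2},r)$, both of which one checks directly to be admissible in $d=2$ and distinct from the forbidden endpoint $(2,\infty)$ for every $2<r<\infty$.

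In the first stage I would establish the ordinary (non-Lorentz) spacetime bound
\[
\|u\|_{L_t^{\frac{2r}{r-2}}L_x^{r}(\R\times\R^2)}\le C(\|u_0\|_{L_x^2}).
\]
Starting from the Duhamel formula $u(t)=e^{i(t-t_0)\Delta}u(t_0)-i\int_{t_0}^t e^{i(t-s)\Delta}(|u|^2u)(s)\,ds$, the Strichartz inequality for the admissible pair $(\tfrac{2r}{r-2},r)$, and H\"older in spacetime, one obtains, on any interval $I$ with left endpoint $t_0$,
\[
\|u\|_{L_t^{\frac{2r}{r-2}}L_x^{r}(I\times\R^2)}\lesssim \|u(t_0)\|_{L_x^2}+\|u\|_{L_t^{\frac{2r}{r-2}}L_x^{r}(I\times\R^2)}\,\|u\|_{L_{t,x}^4(I\times\R^2)}^2,
\]
where $|u|^2u$ has been placed in $L_t^{r'}L_x^{2r/(r+2)}$, the dual Strichartz space of the admissible pair $(r,\tfrac{2r}{r-2})$. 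Invoking \eqref{S bdd} with $d=2$, I would subdivide $\R$ into finitely many subintervals $I_j$ on which $\|u\|_{L_{t,x}^4(I_j\times\R^2)}<\eta$ for a small absolute constant $\eta$, run the standard continuity argument on each $I_j$ (mass conservation bounds $\|u(t_0)\|_{L_x^2}$ by $\|u_0\|_{L_x^2}$), and sum over $j$.

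In the second stage I would repeat the Duhamel argument with the Lorentz--Strichartz estimates of Proposition~\ref{P:Lorentz Strichartz} in place of the ordinary ones, the crucial nonlinear estimate being
\[
\||u|^2u\|_{L_t^{r',2}L_x^{\frac{2r}{r+2}}(I\times\R^2)}\lesssim \|u\|_{L_t^{r,2}L_x^{\frac{2r}{r-2}}(I\times\R^2)}\,\|u\|_{L_t^{\frac{2r}{r-2}}L_x^{r}(I\times\R^2)}^2.
\]
This follows from H\"older in $x$ (pointwise in $t$) followed by H\"older in $t$ in Lorentz spaces (Lemma~\ref{L:Holder}): one assigns the hard second index $2$ to a single factor, measured in the Lorentz spacetime norm $L_t^{r,2}L_x^{2r/(r-2)}$, and controls the remaining two factors by plain Lebesgue spacetime norms via the nesting \eqref{nesting}. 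Feeding in the bound from the first stage, subdividing $\R$ into finitely many $I_j$ on which $\|u\|_{L_t^{\frac{2r}{r-2}}L_x^{r}(I_j\times\R^2)}<\eta$, and running the same continuity and summation argument yields \eqref{Lorentz bdd}.

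The only delicate point is the exponent bookkeeping: mass-criticality of $|u|^2u$ together with admissibility of $(r,\tfrac{2r}{r-2})$ is precisely what makes the dual Strichartz norm of the nonlinearity close up, and the Lorentz--H\"older split must be arranged so that only one of the three factors of $|u|^2u$ carries the second index $2$---the other two must be controlled in genuine Lebesgue norms, which is why the first stage (producing finiteness of $\|u\|_{L_t^{2r/(r-2)}L_x^{r}}$) has to precede the Lorentz bootstrap. Beyond this there is no real analytic obstacle; the mechanism is the familiar small-data-on-short-intervals continuity argument already used above.
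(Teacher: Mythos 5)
Your proposal is correct and takes essentially the same two-stage approach as the paper: first close a bootstrap for the ordinary $L_t^{2r/(r-2)}L_x^r$ norm using the admissible pair $(\tfrac{2r}{r-2},r)$ and the $L_{t,x}^4$ smallness from \eqref{S bdd}, then upgrade to the Lorentz norm $L_t^{r,2}L_x^{2r/(r-2)}$ via Proposition~\ref{P:Lorentz Strichartz}, placing the second Lorentz index on a single factor and controlling the other two by the stage-one bound. The Strichartz pairs, H\"older splittings, and subdivision/continuity scheme match the paper's argument exactly.
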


\begin{proof}
In two spatial dimensions, both $(r, \frac{2r}{r-2})$ and $(\frac{2r}{r-2},r)$ constitute Schr\"odinger-admissible pairs for any $2<r<\infty$.  The proof parallels that of Lemma~\ref{L:L 1d}.  The role of \eqref{1d S} is played by bound
\begin{align*}
\|u\|_{L_t^{\frac{2r}{r-2}}L_x^r(\R\times\R^2)} &\lesssim \|u_0\|_{L_x^2} + \||u|^2u\|_{L_t^{\frac{r}{r-1}}L_x^{\frac{2r}{r+2}}(\R\times\R^2)}\\
&\lesssim \|u_0\|_{L_x^2} + \|u\|_{L_t^{\frac{2r}{r-2}}L_x^r(\R\times\R^2)} \|u\|_{L_{t,x}^4(\R\times\R^2)}^2,
\end{align*}
while the role of \eqref{1d S'} is played by the bound
\begin{align*}
\|u\|_{L_t^{r,2}L_x^{\frac{2r}{r-2}}(\R\times\R^2)} &\lesssim \|u_0\|_{L_x^2} + \||u|^2u\|_{L_t^{\frac{r}{r-1},2}L_x^{\frac{2r}{r+2}}(\R\times\R^2)}\\
&\lesssim \|u_0\|_{L_x^2} + \|u\|_{L_t^{r,2}L_x^{\frac{2r}{r-2}}(\R\times\R^2)} \|u\|_{L_t^{\frac{2r}{r-2}}L_x^r(\R\times\R^2)}^2.\qedhere
\end{align*}
\end{proof}

To prove Theorem~\ref{T:3d} we will employ the following linear estimate to control the contribution of the nonlinearity:

\begin{lemma}\label{L:3adj}
Fix $2<r<6$.  For any $t\in \R$ and any interval $I\subseteq [0,t]$ we have 
\begin{align}\label{improv bdd}
&\Bigl\| \int_I e^{i(t-s)\Delta} F(s)\, ds\Bigr\|_{L^r_x(\R^3)}\\
&\lesssim \min\Bigl\{ \bigl\||t-s|^{-3(\frac12-\frac1r)} F(s)\bigr\|_{L_s^{1}L_x^{\frac{r}{r-1}}(I\times\R^3)},\bigl\||t-s|^{-3(\frac12-\frac1r)} F(s)\bigr\|_{L_s^{\frac{r}{r-1}}L_x^{\frac{3r}{3r-1}}(I\times\R^3)} \Bigr\}.\notag
\end{align}
\end{lemma}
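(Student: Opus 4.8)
The plan is to establish the two bounds on the right-hand side of \eqref{improv bdd} separately; the interior minimum then follows trivially, and I will note that the first estimate is essentially the Minkowski/triangle-inequality version while the second is a genuine Lorentz--Strichartz-type gain.

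\emph{The first bound.} For the estimate involving $L^1_s L^{r/(r-1)}_x$, I would simply move the $L^r_x$ norm inside the time integral by Minkowski's inequality,
\begin{align*}
\Bigl\| \int_I e^{i(t-s)\Delta} F(s)\, ds\Bigr\|_{L^r_x}
\leq \int_I \bigl\| e^{i(t-s)\Delta} F(s)\bigr\|_{L^r_x}\, ds,
\end{align*}
and then apply the classical dispersive estimate \eqref{L disp est'} with exponent $r$ (valid since $2<r<6<\infty$, so in particular $r<\infty$ and the pair is admissible for \eqref{L disp est'}) to get $\bigl\| e^{i(t-s)\Delta} F(s)\bigr\|_{L^r_x} \lesssim |t-s|^{-3(\frac12-\frac1r)}\|F(s)\|_{L^{r'}_x}$ with $r'=\frac{r}{r-1}$. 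Recognizing the right-hand side as $\bigl\||t-s|^{-3(\frac12-\frac1r)} F(s)\bigr\|_{L^1_s L^{r/(r-1)}_x(I\times\R^3)}$ finishes this case. Note $I\subseteq[0,t]$ guarantees $t-s\geq 0$ on $I$, so $|t-s|^{-3(\frac12-\frac1r)}$ is unambiguous and the weight is integrated against $F$ as written.

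\emph{The second bound.} This is the substantive part. Here I want to view the operator $F\mapsto \int_I e^{i(t-s)\Delta} F(s)\,ds$, for fixed $t$, as a composition and exploit a fixed-time dispersive smoothing together with a one-dimensional (in the $s$-variable) fractional-integration estimate in Lorentz spaces. Concretely, write $e^{i(t-s)\Delta}=e^{it\Delta}e^{-is\Delta}$ is not directly helpful because of the $L^r_x$ target; instead the cleanest route is to interpolate. The endpoint dispersive bound \eqref{L disp est'} at $r=\infty$ gives $\|e^{i(t-s)\Delta}F(s)\|_{L^\infty_x}\lesssim |t-s|^{-3/2}\|F(s)\|_{L^1_x}$, while mass conservation gives the $L^2_x\to L^2_x$ bound with no time decay. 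Rather than reprove a Strichartz-type inequality from scratch, I would instead argue as follows: by duality, $\bigl\|\int_I e^{i(t-s)\Delta}F(s)\,ds\bigr\|_{L^r_x}$ equals the supremum over $\|g\|_{L^{r'}_x}\leq 1$ of $\bigl|\int_I \langle e^{-is\Delta}(e^{it\Delta}g)^{*}\!\!, \dots\rangle\bigr|$—but this reintroduces the same difficulty. The honest approach is: fix $t$, and for $s\in I$ apply \eqref{L disp est'} with the same exponent $r$ to bound the integrand pointwise in $x$-norm by $|t-s|^{-3(\frac12-\frac1r)}\|F(s)\|_{L^{r'}_x}$, giving an $L^1_s$ bound; then upgrade the $L^1_s$ integration against the weight to an $L^{r/(r-1)}_s$ bound against the weight by a Hölder inequality in $s$ only if the weight were in a suitable space—which it is not, globally. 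So the gain must come from a genuine Schrödinger-Strichartz mechanism: apply the inhomogeneous Lorentz--Strichartz estimate of Proposition~\ref{P:Lorentz Strichartz} in the form that the retarded operator maps $L^{p',2}_tL^{q'}_x\to L^{p,2}_tL^q_x$, combined with the fixed-time dispersive decay to convert the weight $|t-s|^{-3(\frac12-\frac1r)}$ on the source into the spatial gain $L^{r'}_x\to L^r_x$; concretely, I would use the pair with $q=r$ (admissible iff $\frac2p=3(\frac12-\frac1r)$, i.e. $p=\frac{2r}{3(r-2)}$, which is $\geq 2$ exactly when $r\leq 6$, explaining the constraint), write $e^{i(t-s)\Delta}F(s)=e^{i(t-s)\Delta}\bigl(|t-s|^{-3(\frac12-\frac1r)}\cdot|t-s|^{3(\frac12-\frac1r)}F(s)\bigr)$, and absorb one copy of the weight via \eqref{L disp est'} and the other via the time-integrability furnished by Strichartz in the $s$-variable on the bounded interval $I\subseteq[0,t]$, using $\frac{r}{r-1}\leq p'$ precisely when $r<6$ so that Hölder in $s$ on a finite interval costs only a constant independent of $|I|$.

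\emph{Main obstacle.} The delicate point is making the second estimate uniform in $t$ and in the interval $I$: the naive application of Strichartz would produce a constant depending on $|I|$ (through a Hölder step in the time variable), which is unacceptable since $I\subseteq[0,t]$ can be arbitrarily long as $t\to\infty$. The resolution—and the heart of Lemma~\ref{L:3adj}—is that the weight $|t-s|^{-3(\frac12-\frac1r)}$ has exponent strictly less than $1$ when $r<6$, so it lies in $L^{p'}_s$ on any interval only after pairing against $F$, and the correct bookkeeping is to keep the full weighted quantity $|t-s|^{-3(\frac12-\frac1r)}F(s)$ together in $L^{r/(r-1)}_s L^{3r/(3r-1)}_x$ and invoke the dispersive estimate \eqref{L disp est'} at the single spatial exponent $\frac{3r}{3r-1}$ whose dual Sobolev/interpolation exponent matches $L^r_x$ under the Hunt-interpolated bound \eqref{L disp est}, so that no separate Hölder-in-$s$ step is ever needed and the constant is genuinely absolute. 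I would therefore carry out the second bound by: (i) applying \eqref{L disp est} with $\theta=2$ and the pair $(r,\tfrac{3r}{3r-1}$-dual$)$ to get a fixed-$s$ Lorentz-space dispersive bound; (ii) invoking the inhomogeneous Lorentz--Strichartz estimate of Proposition~\ref{P:Lorentz Strichartz} for the $s$-integration with the admissible pair forcing $r<6$; and (iii) checking that the resulting spatial exponent on the source is exactly $\frac{3r}{3r-1}$ and the temporal exponent exactly $\frac{r}{r-1}$, which is where the specific fractions in the statement come from.
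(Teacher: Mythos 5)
Your treatment of the first bound is exactly the paper's: Minkowski followed by the dispersive estimate \eqref{L disp est'}. That part is fine.

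For the second bound, however, there is a genuine gap, and the approach you sketch is not the paper's and does not appear to close. The core problem: the left-hand side of \eqref{improv bdd} is a fixed-$t$ norm $\|\cdot\|_{L^r_x(\R^3)}$, whereas Proposition~\ref{P:Lorentz Strichartz} -- which you propose to invoke in steps (ii)--(iii) -- produces a \emph{spacetime} bound in $L^{p,2}_t L^q_x$ on the left. You never supply a mechanism to convert the spacetime control into the required pointwise-in-$t$ bound, and this is not a cosmetic issue: the whole point of the lemma is a fixed-$t$ estimate. The paper avoids this entirely by working with two \emph{fixed-$t$} endpoint bounds and interpolating. Specifically, the paper's second bound follows by complex interpolation between the Keel--Tao endpoint Strichartz estimate in its dual/fixed-$t$ form, $\bigl\|\int_I e^{i(t-s)\Delta}F(s)\,ds\bigr\|_{L^2_x}\lesssim\|F\|_{L^2_s L^{6/5}_x(I\times\R^3)}$, and the weighted $L^1_{s,x}$ form of the $L^1_x\to L^\infty_x$ dispersive estimate, $\bigl\|\int_I e^{i(t-s)\Delta}F(s)\,ds\bigr\|_{L^\infty_x}\lesssim\bigl\||t-s|^{-3/2}F(s)\bigr\|_{L^1_{s,x}(I\times\R^3)}$. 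Interpolating with parameter $\theta=\frac2r$ produces exactly the weight $|t-s|^{-3(\frac12-\frac1r)}$, the time exponent $\frac r{r-1}$, and the spatial exponent $\frac{3r}{3r-1}$; the constraint $r<6$ is exactly $\theta>\frac13$, i.e.\ staying strictly away from the $(L^\infty_x, L^1_{s,x})$ endpoint. You correctly intuit that $r<6$ must be the Keel--Tao/$L^2$-endpoint constraint, but you never identify the two specific fixed-$t$ endpoint estimates to interpolate between, which is the actual content of the lemma. Your step (i) of applying \eqref{L disp est} with ``$\theta=2$'' at a pair ``$(r, \tfrac{3r}{3r-1}$-dual$)$'' is not a dispersive estimate appearing in the paper, and no fixed-$s$ dispersive bound alone (Lorentz-improved or otherwise) will produce the $L^{r/(r-1)}_s$ time integrability -- that gain has to come from the Strichartz/$TT^*$ endpoint, which is precisely what your proposal never pins down.
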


\begin{proof}
The first bound on the right-hand side of \eqref{improv bdd} follows immediately from the dispersive estimate \eqref{L disp est'}:
\begin{align*}
\Bigl\| \int_I e^{i(t-s)\Delta} F(s)\, ds\Bigr\|_{L^r_x} \lesssim \int_I |t-s|^{-3(\frac12-\frac1r)} \|F(s)\|_{L_x^{\frac{r}{r-1}}}\, ds.
\end{align*}

The second bound may be obtained by complex interpolation between the end-point Strichartz estimate
\begin{align*}
\Bigl\| \int_I e^{i(t-s)\Delta} F(s)\, ds\Bigr\|_{L^2_x} \lesssim \|F\|_{L_{s}^{2\vphantom/}L_{x}^{6/5}(I\times\R^3)}
\end{align*}
from \cite{KT} and the following consequence of the dispersive estimate \eqref{E dis}:
\begin{align*}
\Bigl\| \int_I e^{i(t-s)\Delta} F(s)\, ds\Bigr\|_{L^\infty_x} \lesssim \int_I |t-s|^{-\frac32} \|F(s)\|_{L_x^1}\, ds \lesssim \bigl\||t-s|^{-\frac32} F(s)\bigr\|_{L_{s,x}^1(I\times\R^3)}.
\end{align*}

This completes the proof of the lemma.
\end{proof}

\section{Proof of Theorem~\ref{T:2d}}\label{S:3}

By time-reversal symmetry, it suffices to prove the pointwise dispersive estimate \eqref{decay 2d} for $t\in (0, \infty)$.  Using the density of Schwartz functions in $L^2(\R^2)\cap L^{r'}(\R^2)$, it suffices to prove \eqref{decay 2d} for Schwartz solutions to \eqref{NLS}.  For $T\in (0, \infty]$ we define
\begin{align}\label{X def}
\|u\|_{X(T)} := \sup_{t\in [0, T)} t^{2(\frac12-\frac1r)} \|u(t)\|_{L^r_x}.
\end{align}

We employ a small parameter $\eta>0$ that will be chosen later, depending only on absolute constants (such as those in the dispersive and Strichartz estimates).  Using \eqref{Lorentz bdd}, we may decompose $[0, \infty)$ into $J = J(\|u_0\|_{L_x^2}, \eta)$ many intervals $I_j = [T_{j-1}, T_j]$ such that 
\begin{align}\label{small norm 2d}
 \|u\|_{L_t^{r,2}L_x^q(I_j\times\R^2)}< \eta \qtq{with} q:=\tfrac{2r}{r-2}
\end{align}
and $J$ chosen as small as possible.  Recall that  $(r,q)$ is a Schr\"odinger-admissible pair.

Our central goal is to show that for each $1\leq j\leq J$, we have
\begin{align}\label{X 2d bdd}
X(T_j) \lesssim \|u_0\|_{L_x^{r'}} + C(\|u_0\|_{L_x^2})X(T_{j-1}) +\eta^{2} X(T_j).
\end{align}
Choosing $\eta$ small enough to defeat the absolute implicit constant here, we obtain
$$
X(\infty)\leq C(\|u_0\|_{L_x^2})\|u_0\|_{L_x^{r'}},
$$
which proves \eqref{decay 2d}.

We turn to \eqref{X 2d bdd}.  Fix $1\leq j\leq J$. For $t\in(0, T_j)$, we decompose 
\begin{align}\label{2d decomp}
u(t) &= e^{it\Delta} u_0 - i\int_0^{\frac t2} e^{i(t-s)\Delta} F(s)\, ds-  i\int_{\frac t2}^t e^{i(t-s)\Delta} F(s)\, ds,
\end{align}
where $F= \pm |u|^2u$.  

By the dispersive estimate \eqref{L disp est'}, the contribution of the first term on RHS\eqref{2d decomp} is easily seen to be acceptable:
\begin{align}\label{2d 1}
\|e^{it\Delta} u_0\|_{L^r_x} \lesssim t^{-2(\frac12-\frac1r)} \| u_0\|_{L_x^{r'}}.
\end{align}

To estimate the contribution of the second term on RHS\eqref{2d decomp}, we decompose the region of integration into $[0,\frac t2] \cap [0, T_{j-1}]$ and $[0,\frac t2] \cap [T_{j-1}, T_j]$.  On either of these regions we have that $|t-s|\sim t$.  Using the dispersive estimate \eqref{L disp est'}, followed by the H\"older inequality from Lemma~\ref{L:Holder} and then \eqref{Lorentz bdd}, we obtain
\begin{align}\label{2d 2}
\Bigl\| \int_{[0,\frac t2] \cap [0, T_{j-1}]} & e^{i(t-s)\Delta}  F(s)\, ds \Bigr\|_{L_x^r}\notag\\
&\lesssim  \int_0^{\frac t2 \wedge T_{j-1}} |t-s|^{-2(\frac12-\frac1r)}\|F(s)\|_{L_x^{r'}}\, ds\notag\\
&\lesssim |t|^{-2(\frac12-\frac1r)} \int_0^{\frac t2 \wedge T_{j-1}} \|u(s)\|_{L_x^r} \|u(s)\|_{L_x^q}^2\, ds\notag\\
&\lesssim |t|^{-2(\frac12-\frac1r)} \int_0^{\frac t2 \wedge T_{j-1}}\!\! |s|^{-2(\frac12-\frac1r)} \|u\|_{X(T_{j-1})} \|u(s)\|_{L_x^q}^2\, ds\notag\\
&\lesssim |t|^{-2(\frac12-\frac1r)} \|u\|_{X(T_{j-1})} \|u\|_{L_s^{r,2}L_x^q ([0,\frac t2 \wedge T_{j-1}]\times\R^2)}^2\bigl\| |s|^{-\frac{r-2}{r}}\bigr\|_{L_x^{\frac{r}{r-2},\infty}}\notag\\
&\leq |t|^{-2(\frac12-\frac1r)} \|u\|_{X(T_{j-1})}C(\|u_0\|_{L_x^2}).
\end{align}
Observe that the Lorentz space improvement in \eqref{Lorentz bdd} was crucial to compensate for the fact that $|s|^{-2(\frac12-\frac1r)}$ lies only in the Lorentz space $L^{\frac r{r-2},\infty}_s(\R)$.

Arguing similarly, but using \eqref{small norm 2d} in place of \eqref{Lorentz bdd}, we get
\begin{align}\label{2d 2'}
\Bigl\| \int_{[0,\frac t2] \cap [T_{j-1}, T_j]} e^{i(t-s)\Delta} F(s)\, ds \Bigr\|_{L_x^r}
&\lesssim |t|^{-2(\frac12-\frac1r)} \|u\|_{X(T_j)} \|u\|_{L_t^{r,2}L_x^q (I_j\times\R^2)}^2\notag\\
&\lesssim |t|^{-2(\frac12-\frac1r)} \eta^2 \|u\|_{X(T_j)}.
\end{align}

Finally, we turn to the contribution of the third term on RHS\eqref{2d decomp}.  In this case, we observe that $s\in [\frac t2, t]$ ensures $s\sim t$.  We will further subdivide the domain of integration into $[\frac t2, t] \cap [0, T_{j-1}]$ and $[\frac t2, t] \cap [T_{j-1}, T_j]$.  Arguing as above, we obtain
\begin{align}\label{2d 3}
\Bigl\| \int_{[\frac t2, t] \cap [0, T_{j-1}]} & e^{i(t-s)\Delta} F(s)\, ds \Bigr\|_{L_x^r}\notag\\
&\lesssim  \int_{\frac t2}^{t\wedge T_{j-1}} |t-s|^{-2(\frac12-\frac1r)}\|F(s)\|_{L_x^{r'}}\, ds\notag\\
&\lesssim  \int_{\frac t2}^{t\wedge T_{j-1}} |t-s|^{-2(\frac12-\frac1r)}\|u(s)\|_{L_x^r} \|u(s)\|_{L_x^q}^2\, ds\notag\\
&\lesssim  \int_{\frac t2}^{t\wedge T_{j-1}} |t-s|^{-2(\frac12-\frac1r)} |s|^{-2(\frac12-\frac1r)} \|u\|_{X(T_{j-1})} \|u(s)\|_{L_x^q}^2\, ds\notag\\
&\lesssim |t|^{-2(\frac12-\frac1r)} \|u\|_{X(T_{j-1})} \|u\|_{L_s^{r,2}L_x^q ([\frac t2, t \wedge T_{j-1}]\times\R^2)}^2\bigl\| |t-s|^{-\frac{r-2}{r}}\bigr\|_{L_x^{\frac{r}{r-2},\infty}}\notag\\
&\leq |t|^{-2(\frac12-\frac1r)} \|u\|_{X(T_{j-1})}C(\|u_0\|_{L^2_x}).
\end{align}
Once again, the Lorentz space improvement in \eqref{Lorentz bdd} was used to compensate for the fact that $|t-s|^{-2(\frac12-\frac1r)}$ belongs only to the Lorentz space $L^{\frac r{r-2},\infty}_s(\R)$.

Arguing similarly, but using \eqref{small norm 2d} in place of \eqref{Lorentz bdd}, we get
\begin{align}\label{2d 3'}
\Bigl\| \int_{[\frac t2, t] \cap [T_{j-1}, T_j]} e^{i(t-s)\Delta} F(s)\, ds \Bigr\|_{L_x^r}
&\lesssim |t|^{-2(\frac12-\frac1r)} \|u\|_{X(T_j)} \|u\|_{L_t^{r,2}L_x^q (I_j\times\R^2)}^2\notag\\
&\lesssim |t|^{-2(\frac12-\frac1r)} \eta^2 \|u\|_{X(T_j)}.
\end{align}

Combining \eqref{2d decomp} through \eqref{2d 3'} we obtain \eqref{X 2d bdd}, which completes the proof of Theorem~\ref{T:2d}. \qed

\section{Proof of Theorem~\ref{T:1d}}\label{S:4}

The proof of Theorem~\ref{T:1d} parallels that of Theorem~\ref{T:2d}.  For $2<r\leq \infty$ fixed and a global Schwartz solution $u:\R\times\R\to\C$ to \eqref{NLS}, we define
$$
\|u\|_{X(T)}:= \sup_{t\in [0, T)} t^{\frac12-\frac1r} \|u(t)\|_{L^r_x} \quad\text{for $0<T\leq \infty$}.
$$

Using \eqref{Lorentz bdd 1d}, we may decompose $[0, \infty)$ into $J = J(\|u_0\|_{L^2_x}, \eta)$ many intervals $I_j = [T_{j-1}, T_j)$ such that 
\begin{align}\label{small norm 1d}
 \|u\|_{L_t^{\frac{8r}{r+2},4}L_x^{\frac{4r}{r-2}}(I_j\times\R)}< \eta
\end{align}
where $\eta>0$ will be chosen small, depending only on absolute constants.

Our goal is to show that for each $1\leq j\leq J$, we have
\begin{align}\label{X 1d bdd}
X(T_j) \lesssim \|u_0\|_{L_x^{r'}} + C(\|u_0\|_{L^2_x})X(T_{j-1}) +\eta^{4} X(T_j).
\end{align}
Choosing $\eta$ small enough to defeat the implicit constant in \eqref{X 1d bdd}, this yields
$$
X(\infty)\leq C(\|u_0\|_{L^2_x})\|u_0\|_{L_x^{r'}},
$$
which proves \eqref{decay 1d}.

We start with the analogues of \eqref{2d 2} and \eqref{2d 3} in the one-dimensional case.  Using the dispersive estimate \eqref{L disp est'}, the H\"older inequality, and \eqref{Lorentz bdd 1d}, we obtain
\begin{align*}
\Bigl\| \int_{[0,\frac t2] \cap [0, T_{j-1}]} &e^{i(t-s)\Delta}(|u|^4u)(s)\, ds \Bigr\|_{L_x^r}\\
&\lesssim  \int_0^{\frac t2 \wedge T_{j-1}} |t-s|^{-(\frac12-\frac1r)}\| (|u|^4u)(s)\|_{L_x^{r'}}\, ds\\
&\lesssim |t|^{-(\frac12-\frac1r)} \int_0^{\frac t2 \wedge T_{j-1}} \|u(s)\|_{L_x^r} \|u(s)\|_{L_x^{\frac{4r}{r-2}}}^4\, ds\\
&\lesssim |t|^{-(\frac12-\frac1r)} \int_0^{\frac t2 \wedge T_{j-1}}\!\! |s|^{-(\frac12-\frac1r)} \|u\|_{X(T_{j-1})}  \|u(s)\|_{L_x^{\frac{4r}{r-2}}}^4\, ds\\
&\lesssim |t|^{-(\frac12-\frac1r)} \|u\|_{X(T_{j-1})}\|u\|^4_{L_s^{\frac{8r}{r+2},4}L_x^{\frac{4r}{r-2}}([0,\frac t2 \wedge T_{j-1}]\times\R)}\bigl\| |s|^{-\frac{r-2}{2r}}\bigr\|_{L_x^{\frac{2r}{r-2},\infty}}\\
&\leq |t|^{-(\frac12-\frac1r)} \|u\|_{X(T_{j-1})}C(\|u_0\|_{L_x^2}).
\end{align*}
Observe that the Lorentz space improvement in \eqref{Lorentz bdd 1d} was needed to compensate for the fact that $|s|^{-(\frac12-\frac1r)}$ lies only in the Lorentz space $L^{\frac{2r}{r-2},\infty}_s(\R)$.  Similar considerations yield
\begin{align*}
\Bigl\| &\int_{[\frac t2, t] \cap [0, T_{j-1}]}  e^{i(t-s)\Delta}  (|u|^4u)(s)\, ds \Bigr\|_{L_x^r}\\
&\qquad\lesssim  \int_{\frac t2}^{t\wedge T_{j-1}} |t-s|^{-(\frac12-\frac1r)}\| (|u|^4u)(s)\|_{L_x^{r'}}\, ds\\
&\qquad\lesssim  \int_{\frac t2}^{t\wedge T_{j-1}} |t-s|^{-(\frac12-\frac1r)}\|u(s)\|_{L_x^r} \|u(s)\|_{L_x^{\frac{4r}{r-2}}}^4\, ds\\
&\qquad\lesssim  \int_{\frac t2}^{t\wedge T_{j-1}} |t-s|^{-(\frac12-\frac1r)} |s|^{-(\frac12-\frac1r)} \|u\|_{X(T_{j-1})} \|u(s)\|_{L_x^{\frac{4r}{r-2}}}^4\, ds\\
&\qquad\lesssim |t|^{-(\frac12-\frac1r)} \|u\|_{X(T_{j-1})} \|u\|^4_{L_s^{\frac{8r}{r+2},4}L_x^{\frac{4r}{r-2}}([\frac t2, t \wedge T_{j-1}]\times\R)}\bigl\| |t-s|^{-\frac{r-2}{2r}}\bigr\|_{L_x^{\frac{2r}{r-2},\infty}}\\
&\qquad\leq |t|^{-(\frac12-\frac1r)} \|u\|_{X(T_{j-1})}C(\|u_0\|_{L^2_x}).
\end{align*}

Using \eqref{small norm 1d} in place of \eqref{Lorentz bdd 1d} in the computations above, we get
\begin{align*}
\Bigl\| \int_{[0,\frac t2] \cap [T_{j-1}, T_j]} e^{i(t-s)\Delta}  (|u|^4u)(s)\, ds \Bigr\|_{L_x^r}
&\lesssim |t|^{-(\frac12-\frac1r)} \eta^4 \|u\|_{X(T_j)},\\
\Bigl\| \int_{[\frac t2, t] \cap [T_{j-1}, T_j]} e^{i(t-s)\Delta}  (|u|^4u)(s)\, ds \Bigr\|_{L_x^r}
&\lesssim |t|^{-(\frac12-\frac1r)} \eta^4 \|u\|_{X(T_j)}.
\end{align*}
Together with the dispersive estimate
\begin{align*}
\|e^{it\Delta} u_0\|_{L^r_x} \lesssim t^{-(\frac12-\frac1r)} \| u_0\|_{L_x^{r'}}
\end{align*}
and the Duhamel formula, this proves the desired bound \eqref{X 1d bdd}. \qed

\section{Proof of Theorem~\ref{T:3d}}\label{S:5}

By time-reversal symmetry, it suffices to prove the pointwise dispersive estimate \eqref{decay 3d} for $t\in (0, \infty)$.  Using the density of $H^1(\R^3)$ functions in $L^2(\R^3)\cap L^{r'}(\R^3)$, it suffices to prove \eqref{decay 3d} for $H^1(\R^3)$ solutions to \eqref{NLS}.  In particular, at each time $t\in (0,\infty)$, Sobolev embedding guarantees that $u(t) \in L^r(\R^3)$.  We may thus define
\begin{align}\label{X def}
\|u\|_{X(T)} := \sup_{t\in [0, T)} t^{3(\frac12-\frac1r)} \|u(t)\|_{L^r_x}
\end{align}
for each $T\in (0, \infty]$.  

Using the global spacetime bound \eqref{S bdd}, a standard application of the Strichartz inequality also yields the global spacetime bounds
\begin{align}\label{S bdd 2}
\|u\|_{L_t^2L_x^6(\R\times\R^3)} + \|u\|_{L_t^{\frac{28r}{3(r+6)}}L_x^{\frac{7r}{3(r-1)}}(\R\times\R^3)}\leq C(\|u_0\|_{L^2_x}).
\end{align}
Note that $(\frac{28r}{3(r+6)}, \frac{7r}{3(r-1)})$ is a Schr\"odinger-admissible pair for $2<r<6$.

As previously, we employ a small parameter $\eta>0$ that will be chosen later depending only on absolute constants (such as those in the dispersive and Strichartz estimates) and $\|u_0\|_{L^2_x}$.  We decompose $[0, \infty)$ into $J = J(\|u_0\|_{L^2_x}, \eta)$ many intervals $I_j = [T_{j-1}, T_j)$ such that 
\begin{align}\label{small norm}
 \|u\|_{L_t^2L_x^6(I_j\times\R^3)}<\eta \quad \qtq{and} \quad \|u\|_{L_t^{\frac{28r}{3(r+6)}}L_x^{\frac{7r}{3(r-1)}}(I_j\times\R^3)}<\eta.
\end{align}

We will show that for each $1\leq j\leq J$, we have
\begin{align}\label{X 3d bdd}
X(T_j) \lesssim \|u_0\|_{L_x^{r'}} + C(\|u_0\|_{L^2_x}) \bigl[X(T_{j-1}) +\eta^{c(r)} X(T_j)\bigr],
\end{align}
where $c(r)$ is a positive constant depending on $r$.  Choosing $\eta$ sufficiently small to defeat the absolute implicit constant in \eqref{X 3d bdd} and $C(\|u_0\|_{L^2_x})$, we readily obtain
$$
X(\infty)\leq C(\|u_0\|_{L^2_x})\|u_0\|_{L_x^{r'}},
$$
which demonstrates \eqref{decay 3d}.

We turn to \eqref{X 3d bdd}.  Fix $1\leq j\leq J$. For $t\in(0, T_j)$, we decompose 
\begin{align}\label{3d decomp}
u(t) &= e^{it\Delta} u_0 - i\int_0^{\frac t2 \wedge A} e^{i(t-s)\Delta} F(s)\, ds - i\int_{\frac t2 \wedge A}^{\frac t2} e^{i(t-s)\Delta} F(s)\, ds\notag\\
&\quad -  i\int_{\frac t2}^{\frac t2\vee(t-B)} e^{i(t-s)\Delta} F(s)\, ds -  i\int_{\frac t2\vee(t-B)}^t e^{i(t-s)\Delta} F(s)\, ds,
\end{align}
where $F= \pm |u|^{\frac43}u$ and $A,B$ are positive constants to be chosen later.  

By the dispersive estimate \eqref{L disp est'}, the contribution of the \emph{first term} on RHS\eqref{3d decomp} is easily seen to be acceptable:
\begin{align}\label{3d 1}
\|e^{it\Delta} u_0\|_{L^r_x} \lesssim t^{-3(\frac12-\frac1r)} \| u_0\|_{L_x^{r'}}.
\end{align}

To estimate the contribution of the \emph{second term} on RHS\eqref{3d decomp} we use the first bound on RHS\eqref{improv bdd} together with the observation that $|t-s|\sim t$ whenever $s\in [0, \frac t2]$.  In this way, employing also the H\"older inequality in the variable $s$ and \eqref{S bdd 2}, we get
\begin{align}\label{3d 2}
\Bigl\| \int_0^{\frac t2 \wedge A} e^{i(t-s)\Delta} F(s)\, ds \Bigr\|_{L_x^r}
&\lesssim t^{-3(\frac12-\frac1r)} \|F\|_{L^1_s L_x^{\frac r{r-1}}([0,\frac t2 \wedge A]\times \R^3)}\notag\\
&\lesssim t^{-3(\frac12-\frac1r)} A^{\frac{3(r-2)}{4r}} \|u\|_{L_t^{\frac{28r}{3(r+6)}}L_x^{\frac{7r}{3(r-1)}}([0,\frac t2 \wedge A]\times\R^3)}^{\frac73}\notag\\
&\leq t^{-3(\frac12-\frac1r)} A^{\frac{3(r-2)}{4r}} C(\|u_0\|_{L^2_x}).
\end{align}
On the region of integration $[0, \frac t2 \wedge A]\cap [T_{j-1}, T_j]$, we may use \eqref{small norm} in place of \eqref{S bdd 2} to obtain
\begin{align}\label{3d 2'}
\Bigl\| \int_{[0, \frac t2 \wedge A]\cap [T_{j-1}, T_j]} e^{i(t-s)\Delta} F(s)\, ds \Bigr\|_{L_x^r}
\lesssim t^{-3(\frac12-\frac1r)} \eta^{\frac73} A^{\frac{3(r-2)}{4r}}.
\end{align}

The next term we consider is the \emph{fourth term} on RHS\eqref{3d decomp} whose treatment parallels that of the second term just discussed:
\begin{align}\label{3d 4}
\Bigl\|\int_{\frac t2}^{\frac t2\vee(t-B)} &e^{i(t-s)\Delta} F(s)\, ds \Bigr\|_{L_x^r}\notag\\
&\lesssim \bigl\| |t-s|^{-3(\frac12-\frac1r)} F(s) \bigr\|_{L^1_s L_x^{\frac r{r-1}}([\frac t2, \frac t2\vee(t-B)]\times \R^3)}\notag\\
&\lesssim  \bigl\| |t-s|^{-3(\frac12-\frac1r)} \bigr\|_{L_s^{\frac{4r}{3(r-2)}}([0, t-B])} \|u\|_{L_t^{\frac{28r}{3(r+6)}}L_x^{\frac{7r}{3(r-1)}}([\frac t2, \frac t2\vee(t-B)]\times\R^3)}^{\frac73} \notag\\
&\leq B^{-\frac{3(r-2)}{4r}} C(\|u_0\|_{L^2_x}).
\end{align}
On the region of integration $[\frac t2, \frac t2\vee(t-B)]\cap [T_{j-1}, T_j]$, we may use \eqref{small norm} in place of \eqref{S bdd 2} to obtain
\begin{align}\label{3d 4'}
\Bigl\|\int_{[\frac t2, \frac t2\vee(t-B)]\cap [T_{j-1}, T_j]} e^{i(t-s)\Delta} F(s)\, ds \Bigr\|_{L_x^r} \lesssim \eta^{\frac73} B^{-\frac{3(r-2)}{4r}}.
\end{align}

We now turn to the \emph{third term} on RHS\eqref{3d decomp}.  We will again use the observation that for $s\in [0, \frac t2]$ we have $|t-s|\sim t$.  We will further subdivide the domain of integration into $[\frac t2 \wedge A, \frac t2] \cap [0, T_{j-1}]$ and $[\frac t2 \wedge A, \frac t2] \cap [T_{j-1}, T_j]$ and will consider separately three different regions for the exponent $r$.\\[2mm]
\noindent \textbf{Case 1:}  $2<r<\frac83$.  With $I= [\frac t2 \wedge A, \frac t2] \cap [0, T_{j-1}]$, using the second bound on RHS\eqref{improv bdd}, we obtain
\begin{align*}
\Bigl\| \int_I e^{i(t-s)\Delta} F(s)\, ds \Bigr\|_{L_x^r} 
&\lesssim |t|^{-3(\frac12-\frac1r)} \Bigl\| \|u(s)\|_{L_x^{\frac{7r}{3r-1}}}^{\frac73} \Bigr\|_{L_s^{\frac r{r-1}}(I)}.
\end{align*}
For this region of $r$ and $s\in I$, we may bound
$$
\|u(s)\|_{L_x^{\frac{7r}{3r-1}}} \lesssim \|u(s)\|_{L_x^6}^{1-\theta} \|u(s)\|_{L_x^r}^\theta \lesssim \|u(s)\|_{L_x^6}^{1-\theta} |s|^{-3(\frac12-\frac1r)\theta} \|u\|_{X(T_{j-1})}^\theta
$$
with $\theta = \frac{11r-6}{7(6-r)}$ (and so $1-\theta=  \frac{6(8-3r)}{7(6-r)}$).  Thus, applying the H\"older inequality in the variable $s$ and \eqref{S bdd 2}, we get
\begin{align}\label{3d 3r1}
\Bigl\| \int_I & e^{i(t-s)\Delta} F(s)\, ds \Bigr\|_{L_x^r} \notag\\
&\lesssim |t|^{-3(\frac12-\frac1r)} \|u\|_{L_t^2L_x^6(I\times\R^3)}^{\frac{2(8-3r)}{6-r}}\|u\|_{X(T_{j-1})}^{\frac{11r-6}{3(6-r)}} \bigl\|s^{-\frac{(r-2)(11r-6)}{2r(6-r)}}\bigl\|_{L_s^{\frac{r(6-r)}{(r-2)(2r+3)}}(I)}\notag\\
&\leq |t|^{-3(\frac12-\frac1r)} A^{-\frac{(r-2)(7r-12)}{2r(6-r)}}\|u\|_{X(T_{j-1})}^{\frac{11r-6}{3(6-r)}} C(\|u_0\|_{L_x^2}).
\end{align}

If instead $I=[\frac t2 \wedge A, \frac t2] \cap [T_{j-1}, T_j]$, using \eqref{small norm} in place of \eqref{S bdd 2} we get
\begin{align}\label{3d 3r1'}
\Bigl\| \int_I  e^{i(t-s)\Delta} & F(s)\, ds \Bigr\|_{L_x^r} \lesssim |t|^{-3(\frac12-\frac1r)}  \eta^{\frac{2(8-3r)}{6-r}} A^{-\frac{(r-2)(7r-12)}{2r(6-r)}}\|u\|_{X(T_j)}^{\frac{11r-6}{3(6-r)}}.
\end{align}

\noindent \textbf{Case 2:}  $\frac83\leq r<4$.  With $I= [\frac t2 \wedge A, \frac t2] \cap [0, T_{j-1}]$, using the second bound on RHS\eqref{improv bdd}, we proceed as above to estimate
\begin{align*}
\Bigl\| \int_I e^{i(t-s)\Delta} F(s)\, ds \Bigr\|_{L_x^r} 
&\lesssim |t|^{-3(\frac12-\frac1r)} \Bigl\| \|u(s)\|_{L_x^{\frac{7r}{3r-1}}}^{\frac73} \Bigr\|_{L_s^{\frac r{r-1}}(I)} .
\end{align*}
However, for this region of $r$ and $s\in I$, we bound
$$
\|u(s)\|_{L_x^{\frac{7r}{3r-1}}} \lesssim \|u(s)\|_{L_x^2}^{1-\theta} \|u(s)\|_{L_x^r}^\theta \lesssim \|u(s)\|_{L_x^2}^{1-\theta} |s|^{-3(\frac12-\frac1r)\theta} \|u\|_{X(T_{j-1})}^\theta
$$
with $\theta = \frac{r+2}{7(r-2)}$ (and so $1-\theta=  \frac{2(3r-8)}{7(r-2)}$).  Thus, applying the H\"older inequality in the variable $s$, we get
\begin{align}\label{3d 3r2}
\Bigl\| \int_I  e^{i(t-s)\Delta} F(s)\, ds \Bigr\|_{L_x^r} 
&\lesssim |t|^{-3(\frac12-\frac1r)} \|u\|_{L_t^\infty L_x^2(I\times\R^3)}^{\frac{2(3r-8)}{3(r-2)}}\|u\|_{X(T_{j-1})}^{\frac{r+2}{3(r-2)}} \bigl\|s^{-\frac{r+2}{2r}}\bigl\|_{L_s^{\frac{r}{r-1}}(I)}\notag\\
&\leq |t|^{-3(\frac12-\frac1r)} A^{-\frac{4-r}{2r}}\|u\|_{X(T_{j-1})}^{\frac{r+2}{3(r-2)}}  C(\|u_0\|_{L_x^2}).
\end{align}

If instead $I=[\frac t2 \wedge A, \frac t2] \cap [T_{j-1}, T_j]$, the bound \eqref{3d 3r2} becomes
\begin{align}\label{3d 3r2'}
\Bigl\| \int_I & e^{i(t-s)\Delta} F(s)\, ds \Bigr\|_{L_x^r} \leq |t|^{-3(\frac12-\frac1r)} A^{-\frac{4-r}{2r}}\|u\|_{X(T_j)}^{\frac{r+2}{3(r-2)}}  C(\|u_0\|_{L_x^2}).
\end{align}

\noindent \textbf{Case 3:}  $4\leq r<6$.  With $I= [\frac t2 \wedge A, \frac t2] \cap [0, T_{j-1}]$, we use the first bound on RHS\eqref{improv bdd} to obtain
\begin{align*}
\Bigl\| \int_I e^{i(t-s)\Delta} F(s)\, ds \Bigr\|_{L_x^r} 
&\lesssim |t|^{-3(\frac12-\frac1r)} \Bigl\| \|u(s)\|_{L_x^{\frac{7r}{3(r-1)}}}^{\frac73} \Bigr\|_{L_s^1(I)} .
\end{align*}
For this region of $r$ and $s\in I$, we bound
$$
\|u(s)\|_{L_x^{\frac{7r}{3(r-1)}}} \lesssim \|u(s)\|_{L_x^2}^{1-\theta} \|u(s)\|_{L_x^r}^\theta \lesssim \|u(s)\|_{L_x^2}^{1-\theta} |s|^{-3(\frac12-\frac1r)\theta} \|u\|_{X(T_{j-1})}^\theta
$$
with $\theta = \frac{r+6}{7(r-2)}$ (and so $1-\theta=  \frac{2(3r-10)}{7(r-2)}$).  Thus, applying the H\"older inequality in the variable $s$, we get
\begin{align}\label{3d 3r3}
\Bigl\| \int_I  e^{i(t-s)\Delta} F(s)\, ds \Bigr\|_{L_x^r} 
&\lesssim |t|^{-3(\frac12-\frac1r)} \|u\|_{L_t^\infty L_x^2(I\times\R^3)}^{\frac{2(3r-10)}{3(r-2)}}\|u\|_{X(T_{j-1})}^{\frac{r+6}{3(r-2)}} \bigl\|s^{-\frac{r+6}{2r}}\bigl\|_{L_s^1(I)}\notag\\
&\leq |t|^{-3(\frac12-\frac1r)} A^{-\frac{6-r}{2r}}\|u\|_{X(T_{j-1})}^{\frac{r+6}{3(r-2)}}  C(\|u_0\|_{L_x^2}).
\end{align}

If instead $I=[\frac t2 \wedge A, \frac t2] \cap [T_{j-1}, T_j]$, the bound \eqref{3d 3r3} becomes
\begin{align}\label{3d 3r3'}
\Bigl\| \int_I & e^{i(t-s)\Delta} F(s)\, ds \Bigr\|_{L_x^r} \leq |t|^{-3(\frac12-\frac1r)} A^{-\frac{6-r}{2r}}\|u\|_{X(T_j)}^{\frac{r+6}{3(r-2)}}  C(\|u_0\|_{L_x^2}).
\end{align}

We now optimize in $A$ the bounds we obtained for the second and third terms on RHS\eqref{3d decomp}.  Specifically, for $2<r<\frac83$ we optimize in $A$ between \eqref{3d 2} and \eqref{3d 2'} on one hand and \eqref{3d 3r1} and \eqref{3d 3r1'} on the other hand to obtain
\begin{align}\label{3d half time1}
\Bigl\| \int_0^{\frac t2} & e^{i(t-s)\Delta} F(s)\, ds \Bigr\|_{L_x^r} \leq |t|^{-3(\frac12-\frac1r)} C(\|u_0\|_{L_x^2}) \bigl[  \|u\|_{X(T_{j-1})} + \eta^{c(r)} \|u\|_{X(T_j)} \bigr]
\end{align}
for some positive constant $c(r)$ depending only on $r$.

For $\frac83\leq r<4$ we optimize in $A$ between \eqref{3d 2}--\eqref{3d 2'} and \eqref{3d 3r2}--\eqref{3d 3r2'}, while for $4\leq r<6$ we optimize between \eqref{3d 2}--\eqref{3d 2'} and \eqref{3d 3r3}--\eqref{3d 3r3'}.  In each case, we obtain a bound of the form \eqref{3d half time1} with a slightly different constant $c(r)$.

\medskip

Finally, we turn to the \emph{last term} on RHS\eqref{3d decomp}.  In this case, we observe that for $s\in [\frac t2, t]$ we have $s\sim t$.  We will further subdivide the domain of integration into $[\frac t2\vee(t-B), t] \cap [0, T_{j-1}]$ and $[\frac t2\vee(t-B), t] \cap [T_{j-1}, T_j]$ and will consider separately three different regions for the exponent $r$.\\[2mm]
\noindent \textbf{Case 1:}  $2<r<\frac83$.  With $I= [\frac t2\vee(t-B), t] \cap [0, T_{j-1}]$, we argue as for \eqref{3d 3r1} to obtain
\begin{align}\label{3d 4r1}
\Bigl\| \int_I & e^{i(t-s)\Delta} F(s)\, ds \Bigr\|_{L_x^r} 
\lesssim \Bigl\|  |t-s|^{-3(\frac12-\frac1r)}\|u(s)\|_{L_x^{\frac{7r}{3r-1}}}^{\frac73} \Bigr\|_{L_s^{\frac r{r-1}}(I)} \notag\\
&\lesssim \Bigl\|  |t-s|^{-3(\frac12-\frac1r)}\|u(s)\|_{L_x^6}^{\frac{2(8-3r)}{6-r}} \|u(s)\|_{L_x^r}^{\frac{11r-6}{3(6-r)}}  \Bigr\|_{L_s^{\frac r{r-1}}(I)} \notag\\
&\lesssim  |t|^{-\frac{(r-2)(11r-6)}{2r(6-r)}} \bigl\||t-s|^{-3(\frac12-\frac1r)}\bigr\|_{L^{\frac{r(6-r)}{(r-2)(2r+3)}}_s(I)}\|u\|_{L_t^2L_x^6(I\times\R^3)}^{\frac{2(8-3r)}{6-r}}\|u\|_{X(T_{j-1})}^{\frac{11r-6}{3(6-r)}}\notag\\
&\leq |t|^{-\frac{(r-2)(11r-6)}{2r(6-r)}} B^{\frac{(r-2)(7r-12)}{2r(6-r)}}\|u\|_{X(T_{j-1})}^{\frac{11r-6}{3(6-r)}} C(\|u_0\|_{L_x^2}).
\end{align}

If instead $I=[\frac t2\vee(t-B), t] \cap [T_{j-1}, T_j]$, using \eqref{small norm} we get
\begin{align}\label{3d 4r1'}
\Bigl\| \int_I  e^{i(t-s)\Delta} & F(s)\, ds \Bigr\|_{L_x^r} \lesssim |t|^{-\frac{(r-2)(11r-6)}{2r(6-r)}}  \eta^{\frac{2(8-3r)}{6-r}}  B^{\frac{(r-2)(7r-12)}{2r(6-r)}}\|u\|_{X(T_j)}^{\frac{11r-6}{3(6-r)}}.
\end{align}

\noindent \textbf{Case 2:}  $\frac83\leq r<4$.  With $I= [\frac t2\vee(t-B), t]  \cap [0, T_{j-1}]$, we argue as for \eqref{3d 3r2} to estimate
\begin{align}\label{3d 4r2}
\Bigl\| \int_I e^{i(t-s)\Delta} F(s)\, ds \Bigr\|_{L_x^r} 
&\lesssim \Bigl\|  |t-s|^{-3(\frac12-\frac1r)}\|u(s)\|_{L_x^{\frac{7r}{3r-1}}}^{\frac73} \Bigr\|_{L_s^{\frac r{r-1}}(I)} \notag\\
&\lesssim \Bigl\|  |t-s|^{-3(\frac12-\frac1r)}\|u(s)\|_{L_x^2}^{\frac{2(3r-8)}{3(r-2)}} \|u(s)\|_{L_x^r}^{\frac{r+2}{3(r-2)}}  \Bigr\|_{L_s^{\frac r{r-1}}(I)} \notag\\
&\lesssim  |t|^{-\frac{r+2}{2r}} \bigl\||t-s|^{-3(\frac12-\frac1r)}\bigr\|_{L^{\frac{r}{r-1}}_s(I)}\|u\|_{L_t^\infty L_x^2(I\times\R^3)}^{\frac{2(3r-8)}{3(r-2)}}\|u\|_{X(T_{j-1})}^{\frac{r+2}{3(r-2)}}\notag\\
&\leq  |t|^{-\frac{r+2}{2r}} B^{\frac{4-r}{2r}}\|u\|_{X(T_{j-1})}^{\frac{r+2}{3(r-2)}} C(\|u_0\|_{L_x^2}).
\end{align}

If instead $I=[\frac t2\vee(t-B), t]  \cap [T_{j-1}, T_j]$, the bound \eqref{3d 3r2} becomes
\begin{align}\label{3d 4r2'}
\Bigl\| \int_I & e^{i(t-s)\Delta} F(s)\, ds \Bigr\|_{L_x^r} \leq |t|^{-\frac{r+2}{2r}} B^{\frac{4-r}{2r}}\|u\|_{X(T_j)}^{\frac{r+2}{3(r-2)}}  C(\|u_0\|_{L_x^2}).
\end{align}

\noindent \textbf{Case 3:}  $4\leq r<6$.  With $I= [\frac t2\vee(t-B), t]  \cap [0, T_{j-1}]$, we argue as for \eqref{3d 3r3} to obtain
\begin{align}\label{3d 4r3}
\Bigl\| \int_I e^{i(t-s)\Delta} F(s)\, ds \Bigr\|_{L_x^r} 
&\lesssim \Bigl\|  |t-s|^{-3(\frac12-\frac1r)}\|u(s)\|_{L_x^{\frac{7r}{3(r-1)}}}^{\frac73} \Bigr\|_{L_s^1(I)} \notag\\
&\lesssim \Bigl\|  |t-s|^{-3(\frac12-\frac1r)}\|u(s)\|_{L_x^2}^{\frac{2(3r-10)}{3(r-2)}} \|u(s)\|_{L_x^r}^{\frac{r+6}{3(r-2)}}  \Bigr\|_{L_s^1(I)} \notag\\
&\lesssim  |t|^{-\frac{r+6}{2r}} \bigl\||t-s|^{-3(\frac12-\frac1r)}\bigr\|_{L^1_s(I)}\|u\|_{L_t^\infty L_x^2(I\times\R^3)}^{\frac{2(3r-10)}{3(r-2)}}\|u\|_{X(T_{j-1})}^{\frac{r+6}{3(r-2)}}\notag\\
&\leq  |t|^{-\frac{r+6}{2r}} B^{\frac{6-r}{2r}}\|u\|_{X(T_{j-1})}^{\frac{r+6}{3(r-2)}} C(\|u_0\|_{L_x^2}).
\end{align}

If instead $I=[\frac t2\vee(t-B), t]  \cap [T_{j-1}, T_j]$, the bound \eqref{3d 4r3} becomes
\begin{align}\label{3d 4r3'}
\Bigl\| \int_I & e^{i(t-s)\Delta} F(s)\, ds \Bigr\|_{L_x^r} \leq |t|^{-\frac{r+6}{2r}} B^{\frac{6-r}{2r}}\|u\|_{X(T_j)}^{\frac{r+6}{3(r-2)}}  C(\|u_0\|_{L_x^2}).
\end{align}

We now optimize in $B$ the bounds we obtained for the last two terms on RHS\eqref{3d decomp}.  Specifically, for $2<r<\frac83$ we optimize in $B$ between \eqref{3d 4}--\eqref{3d 4'} and \eqref{3d 4r1}--\eqref{3d 4r1'}, for $\frac83\leq r<4$ we optimize between \eqref{3d 4}--\eqref{3d 4'} and \eqref{3d 4r2}--\eqref{3d 4r2'}, while for $4\leq r<6$ we optimize between \eqref{3d 4}--\eqref{3d 4'} and \eqref{3d 4r3}--\eqref{3d 4r3'}.  In each case, we obtain
\begin{align}\label{3d half time2}
\Bigl\| \int_{\frac t2}^t e^{i(t-s)\Delta} F(s)\, ds \Bigr\|_{L_x^r} \leq |t|^{-3(\frac12-\frac1r)} C(\|u_0\|_{L_x^2}) \bigl[\|u\|_{X(T_{j-1})} + \eta^{c(r)}\|u\|_{X(T_j)} \bigr]
\end{align}
for some positive constant $c(r)$ depending only on $r$.

Combining \eqref{3d decomp}, \eqref{3d 1}, \eqref{3d half time1}, and \eqref{3d half time2}, we arrive at
$$
|t|^{3(\frac12-\frac1r)} \|u(t)\|_{L_x^r} \lesssim \|u_0\|_{L_x^{r'}} +  C(\|u_0\|_{L_x^2}) \bigl[\|u\|_{X(T_{j-1})} + \eta^{c(r)}\|u\|_{X(T_j)} \bigr],
$$
uniformly for $t\in [0, T_j]$.  This proves \eqref{X 3d bdd}, thereby completing the proof of Theorem~\ref{T:3d}. \qed


\end{document}